\documentclass[a4paper]{article}
\usepackage[utf8]{inputenc}
\usepackage{graphicx} % Required for inserting images
\usepackage{graphicx} % Required for inserting images
\usepackage{hyperref}
% The amssymb package provides various useful mathematical symbols
\usepackage{amssymb}
% Packages and macros go here
\usepackage{amsthm,verbatim}
\usepackage{lipsum}
\usepackage{amsfonts}
\usepackage{graphicx}
\usepackage{epstopdf}
\usepackage{algorithm}
\usepackage{algorithmic}
\usepackage{amsmath}
\usepackage{verbatim}
\usepackage{graphics}
\usepackage{graphicx}
\usepackage{subfigure}
\usepackage{bm}
\usepackage{multirow,multicol}
\usepackage{float}
\usepackage{appendix}
\usepackage{chngcntr}
\usepackage{color}
\usepackage{tikz}
\usepackage{pgfplots}
\usepackage{booktabs}
\usepackage{subfigure}
\usepackage{caption}
\usepackage{subcaption}
\usepackage{placeins}
\usepackage[algo2e,ruled,vlined]{algorithm2e}
\usetikzlibrary {arrows.meta}
\usetikzlibrary {shapes.geometric,patterns,hobby}
\usetikzlibrary{spy}
\usetikzlibrary{shadings}
\usepackage{tikz-3dplot}
\usetikzlibrary{3d}
\usetikzlibrary{calc,arrows.meta,decorations.pathreplacing}
\usepackage{hyperref}
\usepackage{geometry}

\geometry{a4paper,left=3cm,right=3cm,top=3cm,bottom=3cm}
\hypersetup{ colorlinks=true, linkcolor=black, filecolor=black, urlcolor=black }

\newtheorem{Theorem}{Theorem}[section]

\newtheorem{Lemma}[Theorem]{Lemma}

%%%%%%%%%%%%%%%%%%%%%%%%%%%%%%%%%%%%%%%%%%%%%%%%%%%%%%%%%%%%%%%%%%%%%%%%%%%%%%%

%%%%%%%%%%%%%%%%%%%%%%%%%%%%%%%%%%%%%%%%%%%%%%%%%%%%%%%%%%%%%%%%%%%%%%%%%%%%%%%55
%    Vector Definition

\newcommand{\vvv}{\vec{v}}

\newcommand{\vu}{\vec{u}}

\renewcommand{\vec}[1]{\mbox{\boldmath$#1$}}

%\DeclareMathAlphabet{\mathsfsl}{OT1}{cmss}{m}{sl}
\renewcommand{\vec}[1]{\mbox{\boldmath$#1$}}
\newcommand{\dx}{\,\mathrm{d}x}

      %  surface divergence
           %  surface gradient
    % real number

%\DeclareMathAlphabet{\mathsfsl}{OT1}{cmss}{m}{sl}

\renewcommand{\vec}[1]{\mbox{\boldmath$#1$}}

\ifpdf
\DeclareGraphicsExtensions{.eps,.pdf,.png,.jpg}
\else
\DeclareGraphicsExtensions{.eps}
\fi

\setlength{\parindent}{1em}

\title{Topology optimization for microfluidic mixers by a phase field method\thanks{This work was supported in part by the National Natural Science Foundation of China under grants (No. 12401534 and No. 12471377), the China Postdoctoral Science Foundation (No. 2024M751947), the Postdoctoral Fellowship Program (Grade B) of China Postdoctoral Science Foundation (No. GZB20240436), and the Science and Technology Commission of Shanghai Municipality (No. 22DZ2229014)}}
\author{Zongyuan Liu\thanks{School of Mathematical Sciences, East China Normal University, Shanghai 200241, China. E-mail: 51275500058@stu.ecnu.edu.cn
}
%\and Xiang Ji \thanks{School of Mathematical Sciences, Shanghai Jiao Tong University, Shanghai 200240, China. E-mail: xian9ji@outlook.com}
\and Jiajie Li \thanks{School of Mathematical Sciences, Shanghai Jiao Tong University, Shanghai 200240, China. E-mail: lijiajie20120233@163.com}
%\and Shenggao Zhou \thanks{School of Mathematical Sciences, MOE-LSC, CMA-Shanghai, and Shanghai Center for Applied Mathematics,  Shanghai Jiao Tong University, Shanghai, China. Shanghai Zhangjiang Institute of Mathematics, Shanghai, China. E-mail: sgzhou@sjtu.edu.cn}
\and Shengfeng Zhu \thanks{Corresponding author. Key Laboratory of MEA (Ministry of Education) \& Shanghai Key Laboratory of Pure Mathematics and Mathematical Practice \& School of Mathematical Sciences, East China Normal University, Shanghai 200241, China. E-mail: sfzhu@math.ecnu.edu.cn}}

\begin{document}
\maketitle
\begin{abstract}
We investigate multi-physical topology optimization for microfluidic mixers employing the phase-field model. The optimization problem is formulated using a modified Ginzburg–Landau free energy functional. To eliminate fluid blockage in microfluidic mixers, we incorporate the coupled Navier–Stokes, convection–diffusion and Poisson–Boltzmann equations. An Allen–Cahn type gradient flow method is proposed based on sensitivity analysis. The algorithm is validated for its computational effectiveness through numerical simulations of benchmark problems in 2D and 3D. 
\end{abstract}

\textbf{Keywords:} Topology Optimization, microfluidic mixer problem, Poisson–Boltzmann equation, Navier-Stokes equation, convection–diffusion, phase field method

\section{Introduction}
Topology optimization of fluid flows has wide applications, including micromixer design \cite{Andreasen2009, DongYajiLiu2022,Ji2017,Xiong2023}, compressible flows \cite{Plotnikov2012Compressible}, and engine cooling \cite{Zhang2023}, as well as various multiphysics coupling scenarios \cite{Yaji2015ThermalFluid,Yaji2017RedoxFlowTO,Xia2023_TF_TO,Dede2014,Gallorini2023,LiYi2022}. Such problems aim to find a configuration or layout to optimize certain objective subject to geometric or physical constraints. Compared with shape optimization, which adjusts the geometry by controlling boundaries, topology optimization allows greater flexibility in generating and merging internal holes, thus enabling automatic material redistribution within the design domain \cite{BendsoeKikuchi1988,QianHuZhu2022,LiZhu2025}. 

The phase-field method\cite{Bendsoe2003,WZ,SokoJGA2023} has been widely used in topology optimization to effectively capture interface evolution and avoid complex explicit boundary tracking \cite{JinLiXuZhu2024,LiYangZhu2025}. Within this framework of using an auxiliary phase-field for modeling natural phase transitions, the material distribution is typically driven by a so-called Ginzburg-Landau free energy, whose gradient flow is often described by either the Allen–Cahn equation or the Cahn–Hilliard equation. When the temperature is below the critical point, the free energy typically exhibits a double-well potential, which represents the phenomenon of spontaneous symmetry breaking \cite{Landau1937}. Since then, the Ginzburg–Landau free energy has become a fundamental tool for studying symmetry breaking phenomena, as its potential structure captures the stability of different phases and the mechanisms of phase transitions such as the superconductivity \cite{deGennes1966}. In topology optimization problems, spontaneous symmetry breaking is also observed in fluid-related problems. Although the design domain and boundary conditions are often geometrically symmetric in configuration, the resulting optimal structures tend to exhibit asymmetrical features \cite{Andreasen2009,GersborgHansenSigmundHaber2005}. Therefore, to effectively track interface evolution \cite{LiXuZhu2025,HuQianZhu2023,CLWW2022} and enhance numerical stability, constructing phase-field models based on the Ginzburg–Landau-type free energy has become an efficient and robust strategy for fluid topology optimization. However, the phase-field method imposes certain requirements on smoothness and additional topological constraints. For many porous materials, however, complex microstructures cannot be efficiently captured. %To address this issue, a homogenization-based method was proposed by Bendsøe and Kikuchi \cite{BendsoeKikuchi1988} that treats a complex topology as a periodic structure and performs homogenization by solving a cell problem within each unit cell. In recent years, homogenization methods have been extensively studied in a wide range of applications \cite{NoguchiYamada2021}.

Among various fluid applications, microfluidic systems present unique challenges in flow control and mixing due to their small geometric scales and moderate Reynolds numbers. In such regimes, the flow remains laminar and the mixing efficiency is inherently low, making micromixing a key design priority. Therefore, enhancing mixing performance remains a key challenge in the development of high-performance microfluidic devices. Various strategies have been proposed to improve mixing in microfluidic systems, which are broadly classified into active and passive mixing approaches. Passive mixing strategies aim to enhance mixing by modifying the geometry of microchannels, such as through introducing obstacles, bends, or surface patterns to induce chaotic advection and increase interfacial contact between fluid streams~\cite{Kee2008,Hadjigeorgiou2021,Park2024,Du2010,Borrvall2003}. Passive methods are energy-efficient and robust, yet they are often limited by design constraints and flow conditions. Complementary to passive methods, active mixing involves the application of external fields, such as pressure fields, electric, magnetic, or thermal fields, to manipulate fluid flow and promote mixing \cite{Polson2000,Silva2024}. However, in the context of topology optimization for micromixing problems, inlet blockage frequently arises during optimization, especially in three-dimensional design domains. This phenomenon often results from the algorithm’s tendency to favor solid structures near the inlet to enhance mixing by redirecting flow, yet inadvertently impeding fluid entry. To address this issue, various strategies have been proposed, such as through applying spatial filters or incorporating pressure drop constraints into the objective~\cite{Dehghani2020,Guo2018}.

In this work, we derive the Ginzburg–Landau free energy functional by starting from the Ising model, and propose a modification that preserves its original physical meaning. This modification incorporates a higher-order interpolation function into improve regularity and enhance numerical stability. The proposed formulation is validated in both the single-physical field and the multiphysical field settings,  where the governing equations involve the coupling of the Navier–Stokes equations for fluid flow, the convection–diffusion equation for solute transport, and the Poisson–Boltzmann equation for electrokinetic effects. Furthermore, for the three-dimensional mixer problem, we propose a new strategy to mitigate inlet blockage, thus demonstrating the effectiveness of the method in complex multiphysics scenarios.

The rest of the paper is organized as follows. In Section \ref{Topology optimization in microfluidic mixer}, we first introduce the governing equations and the corresponding optimization problem. In Section \ref{Sensitivity analysis}, we derive the adjoint system of the optimization problem by using the Fréchet derivative and obtain the sensitivity of the phase-field variable. Based on this analysis, we propose a phase-field iteration scheme that preserves its underlying physical significance. In Section~\ref{Numerical examples}, we first present our algorithm and then test the proposed method on classical energy dissipation problems. Subsequently, we solve two and three-dimensional multiphysics-coupled mixer problems. Finally, we provide a brief summary in Section~\ref{conclusion}.

\section{Topology optimization in microfluidic mixer}\label{Topology optimization in microfluidic mixer}
We first introduce the coupled physical system associated with incompressible fluids, electric potential, and matter concentration. Then, we consider optimization problems with phase-field modeling.
\subsection{Governing equations}
Let $\Omega\subset\mathbb{R}^d(d=2,3)$ be an open, bounded domain with a Lipschitz boundary $\partial \Omega$. The boundary $\partial\Omega$ is partitioned into four disjoint components $\partial\Omega$ := $\Gamma_i\cup\Gamma_o\cup\Gamma_u\cup\Gamma_d$. Here, $\Gamma_i$ and $\Gamma_o$ represent the inlet and outlet, respectively. Additionally, $\Gamma_w:= \Gamma_d \cup \Gamma_u$ denotes the side walls of the channel. For the design domain with complex boundary configurations, the reader is referred to Fig. \ref{phaseFig} for more information.

\begin{figure}[htbp]
\centering
\centering
    \includegraphics[width=3.0 in]{./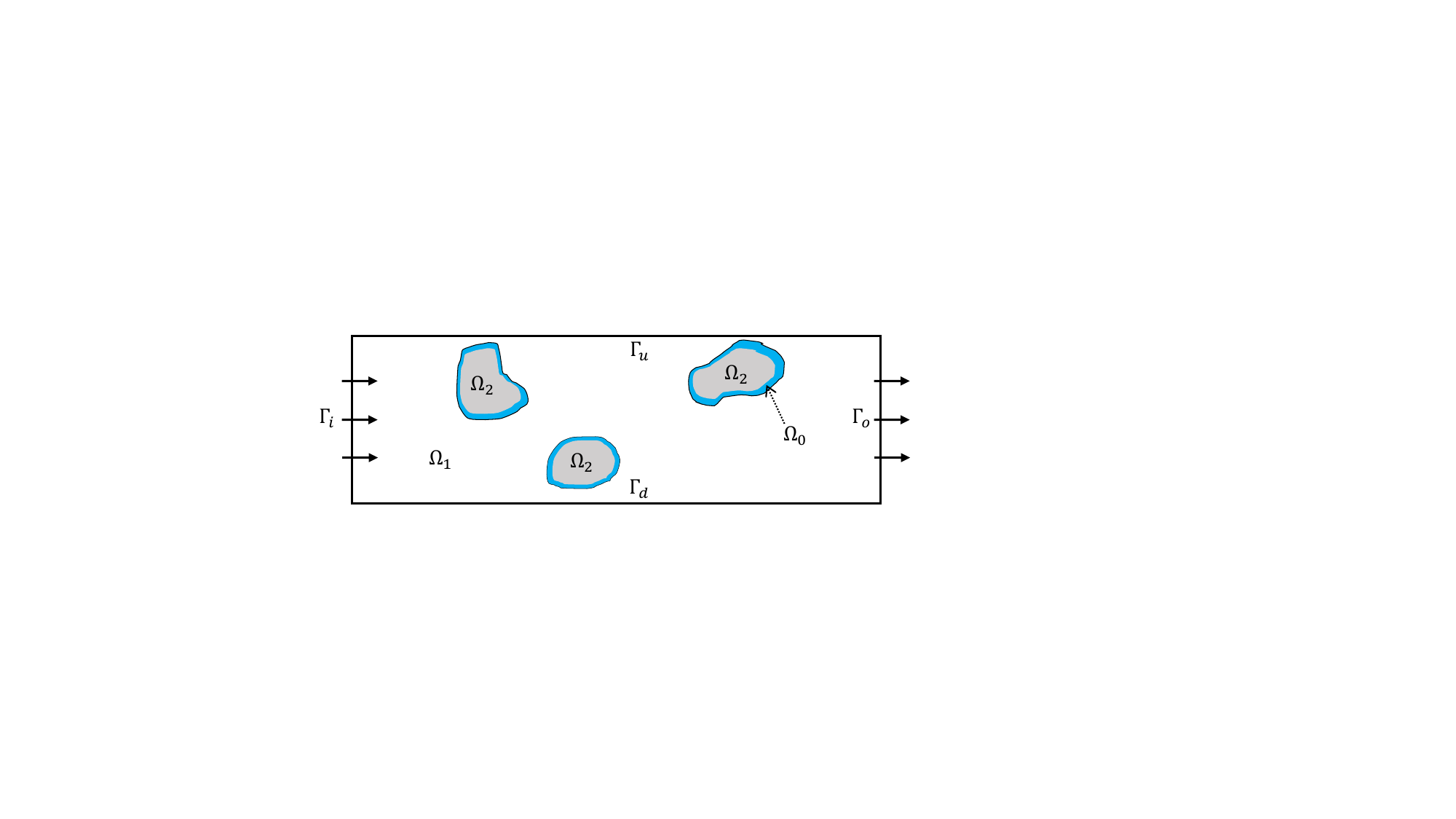}
 \caption{The design domain consisting of fluid $\Omega_1$ and solid $\Omega_1$ regions.}
 \label{phaseFig}
\end{figure}

%\subsubsection{Incompressible Navier-Stokes equations}
The dimensionless steady-state incompressible Navier-Stokes equations are formulated as: Find the velocity field $\bm u: \Omega\rightarrow \mathbb{R}^d$ and pressure ${p}: \Omega\rightarrow \mathbb{R}$ such that
\begin{equation}\label{NS}
\left\{
\begin{aligned}
&-\frac{1}{\rm Re}\Delta\bm u  +(\bm u \cdot \nabla) \bm u +\alpha\bm u +\nabla {p} = \bm f,\ &&{\rm in}\ \Omega, \\
&\nabla\cdot \bm u  =0, &&{\rm in}\ \Omega, \\
&\bm u  =\bm u _0,\quad &&{\rm on}\ \Gamma_i, \\
&\bm u  = \bm 0,\ \ \quad &&{\rm on}\ \Gamma_w,\\
&(\nabla\bm u -{p}\mathbf{I}) \bm n = \bm 0,\quad &&{\rm on}\ \Gamma_o,
\end{aligned}\right.
\end{equation}
where Re $>0$ denotes the Reynolds number, \(\alpha: \Omega \to \mathbb{R}\) represents the inverse permeability, \(\bm{f}: \Omega \to \mathbb{R}^d\) denotes external body force, $\bm u_0: \Gamma_i\rightarrow \mathbb{R}^d$ is an inflow velocity on the inlet, $\mathbf{I}\in \mathbb{R}^{d\times d}$ denotes the identity matrix, and $\bm n$ is an outward unit vector normal to the boundary. %In these boundary settings, fluid is introduced through the inlet boundary \(\Gamma_i\). %with no-slip conditions being imposed on the upper and lower boundaries \(\Gamma_w\), which enforce the velocity to be zero. %Meanwhile, \(\Gamma_o\) allows free fluid egress.

%\subsubsection{Poisson-Boltzmann equations}
Given a dielectric permittivity \(\varepsilon\), the dimensionless Poisson–Boltzmann equation for a symmetric monovalent electrolyte is formulated as: Find the electric potential \(\psi: \Omega \to \mathbb{R}\) such that
\begin{equation}\label{PB}
\left\{
    \begin{aligned}
        &-\nabla \cdot (\varepsilon^2 \nabla \psi)=\rho_e, \quad &&{\rm in}\ \Omega,\\
        &\rho_e=\rho_0 (e^{-\psi}-e^{\psi}), \quad &&{\rm in}\ \Omega,\\
        &\psi=\psi_i,\quad &&{\rm on}\ \Gamma_i, \\
        &\psi=\psi_w,\quad &&{\rm on}\ \Gamma_w, \\
        &\nabla \psi \cdot \bm n = 0,\quad &&{\rm on}\ \Gamma_o, \\
    \end{aligned}\right.
\end{equation}
%The first equation in \eqref{PB} is the Poisson equation, 
where \(\rho_e: \Omega \to \mathbb{R}\) denotes the net charge density following the Boltzmann distribution with \(\rho_0\) being the bulk ionic concentration of both positive and negative charges. Here, $\psi_i:\Gamma_i\rightarrow\mathbb{R}$ and $\psi_w:\Gamma_w\rightarrow\mathbb{R}$ are the prescribed electric potentials on the inlet and wall, respectively. The electric field contributes to the external force term in \eqref{NS}, expressed as \(\bm{f} = -\rho_e \nabla \psi\), which influences the flow field dynamics.

%\subsubsection{Convection–diffusion equations}
The transport of concentration of substances driven by fluid flows is governed by the convection–diffusion equation: Find the concentration field $c: \Omega\rightarrow \mathbb{R}$ such that
\begin{equation}\label{cd}
\left\{
\begin{aligned}
&\bm u \cdot \nabla c - \frac{1}{\rm Pe} \Delta c =0,\quad &&{\rm in}\ \Omega, \\
&c = c_0 , \quad &&{\rm on}\ \Gamma_i, \\
&\nabla c \cdot \bm n = 0, \quad &&{\rm on}\ \Gamma_w \cup \Gamma_o,
\end{aligned}\right.
\end{equation}
where $\rm Pe$ denotes the Péclet number and $c_0$ is a prescribed inlet concentration.

Let us introduce some notations. Denote the Hilbert spaces $H^1(\Omega):=W^{1,2}(\Omega)$ and $H_0^1(\Omega):=\{v\in H^1(\Omega)\,|\, v= 0\ {\rm on}\ \partial\Omega\}$, where $W^{1,2}(\Omega)$ consists of square-integrable functions and their weak derivatives are also square-integrable. Introduce spaces of vectorial functions $\textbf{H}^1(\Omega):= H^1(\Omega)^d$ and $\textbf{H}^1_0(\Omega):= H^1_0(\Omega)^d$. Assuming $\bm u_0 \in \textbf{H}^{\frac{1}{2}}(\Gamma_i\cup \Gamma_w):=H^{\frac{1}{2}}(\Gamma_i\cup \Gamma_w)^d$, define $\textbf{H}^1_d (\Omega):=\{\vvv\in \textbf{H}^1(\Omega)\,|\ \vvv = \vu_0 \ {\rm on}\ \Gamma_i\cup\Gamma_w\}$, $\textbf{H}^1_{d0} (\Omega):=\{\vvv\in \textbf{H}^1(\Omega)\,|\, \ \vvv =\bm 0 \ {\rm on}\ \Gamma_i\cup\Gamma_w\}$ and $L_0^2(\Omega):=\{q\in L^2(\Omega)\vert \int_\Omega q\dx = 0 \}$. Given the boundary data $\psi_i\in L^2(\Gamma_i)$ and $\psi_w\in L^2(\Gamma_w)$, we denote the set $H_\psi^1(\Omega):=\{ \xi\in H^1(\Omega)\,|\, \xi=\psi_i\ {\rm on}\ \Gamma_i, \ \xi=\psi_w  \ {\rm on}\ \Gamma_w\}$ for the electric potential and $H_{\psi0}^1(\Omega):=\{ \xi\in H^1(\Omega)\,|\, \xi=0\ {\rm on}\ \Gamma_i, \ \xi=0  \ {\rm on}\ \Gamma_w\}$. Given $c_0\in L^2(\Gamma_i)$, introduce a set for the concentration as $H_c^1(\Omega):=\{ s\in H^1(\Omega)\,|\, s=c_0\ {\rm on}\ \Gamma_i  \}$, and a space $H_{c0}^1(\Omega):=\{ s\in H^1(\Omega)\,|\, s=0\ {\rm on}\ \Gamma_i  \}$.
\subsection{Phase-field models} \label{Phase-field models}
In topology optimization, the design problem is typically formulated as the minimization of an objective functional subject to the PDEs and geometric constraints. Alternatively, the optimization process can be interpreted through the dynamics of a phase-field model. The domain $\Omega$ is decomposed into three disjoint subdomains $\Omega_1$, $\Omega_2$, and $\Omega_0$, such that $\overline{\Omega}=\overline{\Omega}_1\cup \overline{\Omega}_2 \cup \overline{\Omega}_0$, where $\Omega_1$, $\Omega_2$ and $\Omega_0$ denote the fluid region, solid region, and porous medium, respectively (see Fig. \ref{phaseFig}). The subdomains of $\Omega$ are characterized implicitly by a phase-field function such that
\begin{equation*}\left\{
\begin{aligned}
\phi(\bm x)=1&, \quad &&\bm x\in \Omega_1,\\
0<\phi(\bm x)< 1&,&&\bm x\in  \Omega_0,\\
\phi(\bm x)=0&, \quad &&\bm x\in  \Omega_2.
\end{aligned}\right.
\end{equation*}
Introduce an admissible set $\mathcal{O}_{\rm ad}:=\{\phi\in H^1(\Omega)\cap L^\infty(\Omega)|\  0\leq\phi\leq1\ {\rm a.e.\ in\ \Omega} \}$ for the phase-field function. For a given $\phi\in \mathcal{O}_{\rm ad}$, the inverse permeability is defined as $\alpha(\phi):=\alpha_0(1-\phi)$ with $\alpha_0>0$ being a prescribed coefficient, and the dielectric permittivity is $\varepsilon(\phi):=\varepsilon_0(1-\phi)+\varepsilon_m\phi,\ 0<\varepsilon_m \ll 1$. We consider the weak formulation of the Navier-Stokes equations coupled with Poisson-Boltzmann and convection–diffusion equations: Given $\phi\in \mathcal{O}_{\rm ad}$, find $(\bm u, p, \psi, c)\in \textbf{H}^1_d(\Omega) \times L^2_0(\Omega)\times H_\psi^1(\Omega)\times  H^1_c(\Omega)$ such that
\begin{equation}\label{weakNSPBC}
\small
\left\{
  \begin{aligned}  
&\int_\Omega \varepsilon(\phi)\nabla\psi\cdot\nabla\xi-\rho_0(e^{-\psi}-e^{\psi})\xi\dx=0,&&\forall \xi\in \textbf{H}_{\psi0}^1(\Omega),\\
&\int_\Omega \frac{1}{\rm Re}\nabla \bm u:\nabla\bm v+ (\bm u\cdot \nabla)\bm u\cdot \bm v- p \nabla\cdot \bm v+\alpha(\phi)\bm u\cdot \vvv+\rho_0 (e^{-\psi}-e^{\psi})\nabla\psi\cdot \vvv\dx=0, &&\forall\vvv\in \textbf{H}_{d0}^1(\Omega),\\
&\int_\Omega q \nabla\cdot \bm u \dx =0,&& \forall q\in L^2(\Omega),\\
&\int_\Omega (\bm u \cdot \nabla c)s + \frac{1}{\rm Pe}\nabla c\cdot \nabla s\dx = 0, &&\forall s\in H_{c0}^1(\Omega).
\end{aligned}\right.
\end{equation}

%\subsubsection{Energy dissipation problem}
%\subsubsection{Microfluidic mixer}

%\subsubsection{Optimization problems}
Denote by $$ F(\phi):=\int_\Omega{\bigg[\frac{\kappa}{2} |\nabla \phi|^2 + w(\phi)\bigg]\dx}$$ the Ginzburg-Landau free energy with a diffusive constant $\kappa>0$ and a double-well potential
\begin{equation}\label{dp}
w(\phi) = \frac{1}{4} \phi^2 (1 - \phi)^2.
\end{equation}
We consider four types of topology optimization problems:
\begin{itemize}
    \item \textbf{Energy dissipation problem:} We consider the classic energy dissipation problem, which has applications, e.g., design of biological fluids and microfluidic devices. Consider to minimize the energy dissipation 
\begin{equation}\label{J1}
    \min_{\phi \in \mathcal{O}_{\rm ad}} J_1(\phi):=\int_{\Omega}\frac{1}{2\rm Re}\vert\nabla\bm u(\phi) \vert^2+\frac{\alpha(\phi)}{2}\vert\bm u(\phi) \vert^2\dx,
\end{equation}
to reduce the interfacial resistance within liquids, thereby enhancing the overall fluid transport efficiency, where $\bm{u}=\bm{u}(\phi)$ is the velocity field of \eqref{weakNSPBC}.
The first term in \eqref{J1} denotes the viscous dissipation energy, while the second term in \eqref{J1} represents the penalty imposed on the fluid within the solid region. The total free energy of phase-field modeling is formulated as 
\begin{equation}\label{W1}
    \mathcal{W}_1(\phi) := F(\phi) +\int_\Omega\eta\phi^3(6\phi^2-15\phi+10)\frac{J_1'(\phi)}{\|J_1'(\phi)\|}\dx+\frac{\beta_3}{2} (V(\phi) - V_0)^2,
\end{equation}
where $\eta,\beta_3$ are positive constants, and a volume constraint of the solid phase is incorporated by penalty with
\begin{equation*}
     \quad V(\phi):=\int_\Omega{(1-\phi)}\dx
\end{equation*}
and $V_0>0$ being the target volume of the solid domain. 

    \item \textbf{Mixer problem (single objective):} The main topology optimization problem we consider is the mixer design problem \cite{Polson2000,Silva2024}, where a mixing agent is introduced through the inlet tube. The governing equations are the Navier-Stokes equations (\ref{NS}) and convection-diffusion equation (\ref{cd}). To achieve more uniform mixing at the outlet, consider
     \begin{equation}\label{J2}
     \min_{\phi \in \mathcal{O}_{\rm ad}}J_2(\phi):=\frac{1}{2}\int_{\Gamma_o}(c(\phi)-c_d)^2{\rm d}\Gamma,
 \end{equation}
subject to the solid-phase volume constraint $V(\phi)=V_0$. Here, $c_d\in L^2(\Gamma_o)$ denotes the target average concentration at the outlet. Consider optimizing the total free energy as 
\begin{equation}\label{W2}
    \mathcal{W }_2(\phi):=F(\phi)+ \int_\Omega\eta\phi^3(6\phi^2-15\phi+10)\frac{J_2'(\phi)}{\|J_2'(\phi)\|}\dx+\frac{\beta_3}{2} (V(\phi) - V_0)^2.
\end{equation}

    \item \textbf{Electrokinetically enhanced mixer problem:} The governing system includes the Poisson-Boltzmann equation (\ref{PB}), Navier-Stokes equations (\ref{NS}), and convection-diffusion equation (\ref{cd}). The objective is to minimize $J_2$. The associated free energy takes the same form as $\mathcal{W}_2$.

    \item \textbf{Mixer problem (multi-objective):} The governing equations are the Navier-Stokes equations coupled
with Poisson-Boltzmann and convection–diffusion equations. The objective is a weighted combination of energy dissipation and mixing performance: $\beta_1 J_1 + \beta_2 J_2$. The corresponding model problem reads as
\begin{equation}\label{W3}
    \mathcal{W}_3(\phi):=F(\phi)+\int_\Omega\eta\phi^3(6\phi^2-15\phi+10)\frac{\beta_1J_1'(\phi)+ \beta_2J_2'(\phi)}{\|\beta_1J_1'(\phi)+ \beta_2J_2'(\phi)\|}\dx+\frac{\beta_3}{2} (V(\phi) - V_0)^2,
\end{equation}
where $\beta_1, \beta_2$ is a positive constant and the modified forms of free energy will be explained in Section \ref{Gradient flow}.
\end{itemize}
\section{Phase field method}\label{Sensitivity analysis}
We deduce the weak form of the adjoint problem corresponding to the optimal control type of model problem. Then the sensitivity analysis is performed via obtaining the Fr\'echet derivative of the objective. A gradient flow method of the Allen-Cahn type is proposed for the model problem.
\subsection{Sensitivity analysis}
\begin{Lemma}
Let $\phi\in \mathcal{O}_{\rm ad}$ and $(\bm u, p, \psi, c)\in \textbf{H}^1_d(\Omega) \times L^2_0(\Omega)\times H^1_\psi(\Omega) \times H^1_c(\Omega)$ be the solution of the coupled Navier-Stokes equations \eqref{weakNSPBC}. Then, the weak formulation of the adjoint system is given by: Find $(\vvv,q,\xi,s)\in \textbf{H}_{d0}^1(\Omega)\times L^2_0(\Omega)\times H_{\psi0}^1(\Omega)\times H_{c0}^1(\Omega)$ such that
\begin{equation}\label{weakAdj}
\left\{
    \begin{aligned}
        &\int_{\Omega}\frac{1}{\rm Re}\nabla \vvv:\nabla\bm y+(\bm y \cdot\nabla)\bm u \cdot\vvv-(\bm u \cdot\nabla)\bm \vvv \cdot {\bm y}+\alpha(\phi) \vvv\cdot\bm y+q\nabla\cdot\bm y +(\bm y \cdot\nabla c)s\dx\\
        &\quad=\beta_1 \int_\Omega\frac{1}{\rm Re}\nabla\vu: \nabla \bm y+\alpha(\phi)\vu\cdot\bm y\dx,\ \forall\bm y\in \textbf{H}_{d0}^1(\Omega),\\
        &\int_\Omega r\nabla\cdot\vvv\dx=0,\ \forall r\in L^2(\Omega),\\
        &\int_\Omega \varepsilon(\phi)\nabla\xi\cdot\nabla \chi+\rho_0(e^{-\psi}+e^{\psi})\xi\ \chi-\rho_0(e^{-\psi}+e^\psi)\nabla\psi\cdot\vvv\ \\
        &\quad +\rho_0(e^{-\psi}-e^\psi)\nabla \chi\cdot\vvv\dx=0,\ \forall \chi\in H^1_{\psi0}(\Omega)\\
        &\int_\Omega (\bm u \cdot\nabla z)s+\frac{1}{\rm Pe}\nabla s\cdot\nabla z\dx=\int_{\Gamma_o}{\beta_2(c-c_d)z {\rm d}\Gamma},\ \forall z\in H^1_{c0}(\Omega).\\
    \end{aligned} \right.
\end{equation}
\end{Lemma}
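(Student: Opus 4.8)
The plan is to derive the adjoint system \eqref{weakAdj} via the formal Lagrangian (adjoint-state) method, which is the standard route for optimal-control problems governed by coupled PDEs. First I would introduce a Lagrangian functional by appending each of the four state equations in the weak form \eqref{weakNSPBC} to the objective $\beta_1 J_1 + \beta_2 J_2$, using the adjoint variables $(\vvv, q, \xi, s)$ as Lagrange multipliers for the Navier--Stokes momentum equation, the incompressibility constraint, the Poisson--Boltzmann equation, and the convection--diffusion equation, respectively. The key observation is that the stationarity of this Lagrangian with respect to each \emph{state} variable, in a direction taken from the corresponding homogeneous test space, yields exactly one line of the adjoint system.

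The central computation is to take the Fr\'echet derivative of the Lagrangian with respect to each state and set it to zero. Differentiating with respect to the velocity $\bm u$ in a direction $\bm y \in \textbf{H}_{d0}^1(\Omega)$ produces the first adjoint equation: the linearization of the nonlinear convective term $(\bm u\cdot\nabla)\bm u$ gives the two transport contributions $(\bm y\cdot\nabla)\bm u\cdot\vvv$ and $(\bm u\cdot\nabla)\bm y\cdot\vvv$ (the latter reappearing as $-(\bm u\cdot\nabla)\vvv\cdot\bm y$ after integration by parts, explaining the sign and the swapped roles of $\bm y$ and $\vvv$), while the coupling to concentration produces the term $(\bm y\cdot\nabla c)s$, and the derivative of $J_1$ supplies the right-hand side $\beta_1\int_\Omega \frac{1}{\rm Re}\nabla\bm u:\nabla\bm y + \alpha(\phi)\bm u\cdot\bm y\dx$. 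Differentiating with respect to $p$ gives the divergence-free adjoint constraint; differentiating with respect to $\psi$ produces the third equation, where the derivatives of $e^{-\psi}\mp e^{\psi}$ generate the symmetric factor $\rho_0(e^{-\psi}+e^{\psi})$ and the body-force coupling $\rho_e\nabla\psi$ in \eqref{NS} contributes the two mixed terms involving $\vvv$; finally, differentiating with respect to $c$ yields the last equation, whose right-hand side $\int_{\Gamma_o}\beta_2(c-c_d)z\diff\Gamma$ comes directly from $J_2'$.

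The main obstacle, and the step requiring the most care, is the treatment of the boundary terms generated by integration by parts when transposing the convection operators and the Laplacians. One must verify that the natural (Neumann/outflow) boundary conditions on $\Gamma_o$ for the state, together with the homogeneous conditions imposed on the adjoint test spaces $\textbf{H}_{d0}^1(\Omega)$, $H_{\psi0}^1(\Omega)$, $H_{c0}^1(\Omega)$ on $\Gamma_i\cup\Gamma_w$, cause all spurious boundary integrals to vanish except the desired $\Gamma_o$ term in the concentration equation. In particular, transposing $(\bm u\cdot\nabla c)$ to act on the adjoint variable $s$ requires using $\nabla\cdot\bm u = 0$ and the no-penetration condition so that no boundary contribution survives. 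Once these boundary terms are shown to vanish or collapse to the stated outflow integral, collecting the four stationarity conditions gives \eqref{weakAdj} exactly, completing the derivation.
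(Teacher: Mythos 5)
Your proposal is correct and follows essentially the same route as the paper: a Lagrangian built from $\beta_1 J_1+\beta_2 J_2$ and the four weak state equations, stationarity with respect to each state variable in the homogeneous test spaces, and the divergence-free identity $\int_\Omega(\bm u\cdot\nabla)\delta_{\bm u}\cdot\vvv\dx=-\int_\Omega(\bm u\cdot\nabla)\vvv\cdot\delta_{\bm u}\dx$ to put the linearized convective term in adjoint form. The only cosmetic difference is that the paper leaves the concentration adjoint equation with $\bm u\cdot\nabla$ acting on the test function $z$ rather than transposing it onto $s$, so the boundary-term bookkeeping you describe for that equation is not actually needed there.
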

\begin{proof}
By utilizing the weak formulation of the coupled system (\ref{weakNSPBC}) and the objectives (\eqref{J1} and \eqref{J2}), we define
\begin{equation}\label{mixingL}
\begin{aligned}
    &\mathcal{L}(\phi, \psi, \bm u, p, c; \zeta, \bm v, q, s)\\
    :=&\int_{\Gamma_0}{\frac{\beta_2}{2}(c-c_d)^2}{\rm d}\Gamma+\frac{\beta_1}{2}\int_{\Omega}\frac{1}{\rm Re}\vert\nabla\bm u \vert^2+\alpha(\phi)\vert\bm u \vert^2\dx\\ 
    &-\int_{\Omega}\frac{1}{\rm Re}\nabla\bm u :\nabla\vvv+(\bm u \cdot\nabla)\bm u \cdot\vvv-p\nabla\cdot\vvv+\alpha(\phi)\bm u \cdot\vvv+\rho_0 (e^{-\psi}-e^{\psi})\nabla\psi\cdot\vvv\dx\\
    &-\int_\Omega \varepsilon(\phi)\nabla\psi\cdot\nabla\xi-\rho_0(e^{-\psi}-e^{\psi})\xi\dx\\
    &-\int_\Omega{q\nabla \cdot\bm u }\dx-\int_{\Omega}(\bm u \cdot\nabla c)s+\frac{1}{\rm Pe}\nabla c\cdot\nabla s\dx,
\end{aligned}
\end{equation}
where $(\vu,p,\psi,c)$ is the solution of \eqref{weakNSPBC} with the prescribed phase-field function and $(\vvv,q,\xi,s)\in \textbf{H}_{d0}^1(\Omega)\times L^2(\Omega)\times H_{\psi0}^1(\Omega)\times H_{c0}^1(\Omega)$. Let the Fr\'{e}chet derivative of $\mathcal{L}$ with respect to each state variable be zero
\begin{equation}\label{mixingAdjointSystem}
\left\{
    \begin{aligned}
        & \left\langle\frac{\partial \mathcal{L}}{\partial \bm u},\delta_{\vu} \right\rangle=0,\ \forall \delta_{\vu} \in \textbf{H}_{d0}^1(\Omega), 
        &&\left\langle\frac{\partial \mathcal{L}}{\partial p}, \delta_p \right\rangle=0, \ \forall\delta_p\in L^2(\Omega), \\
        &\left\langle\frac{\partial \mathcal{L}}{\partial \psi}, \delta_\psi\right\rangle=0,\ \forall\delta_\psi \in H_{\psi0}^1(\Omega),
        &&\left\langle\frac{\partial \mathcal{L}}{\partial c}, \delta_c\right\rangle=0,\ \forall\delta_c \in H_{c0}^1(\Omega).\\
    \end{aligned}\right.
\end{equation}
The expanded form of system \eqref{mixingAdjointSystem} is given as follows:
\begin{equation*}
\left\{
    \begin{aligned}
        &\int_{\Omega}\frac{1}{\rm Re}\nabla\delta_{\bm u} :\nabla \vvv+(\delta_{\bm u} \cdot\nabla)\bm u \cdot\vvv+(\bm u \cdot\nabla)\delta_{\bm u} \cdot\vvv+\alpha(\phi)\delta_{\bm u} \cdot\vvv+q\nabla\cdot\delta_{\bm u} +(\delta_{\bm u} \cdot\nabla c)s\dx\\ 
        &\quad=\beta_1 \int_\Omega\frac{1}{\rm Re}\nabla\vu: \nabla \delta_{\bm u}+\alpha(\phi)\vu\cdot\delta_{\bm u}\dx,\\ 
        &\int_\Omega\delta_p\nabla\cdot\vvv\dx=0,\\ 
        &\int_\Omega \varepsilon(\phi)\nabla\xi\cdot\nabla\delta_\psi+\rho_0(e^{-\psi}+e^{\psi})\xi\ \delta_\psi-\rho_0(e^{-\psi}+e^\psi)\nabla\psi\cdot\vvv\ \delta_\psi+\rho_0(e^{-\psi}-e^\psi)\nabla\delta_\psi\cdot\vvv\dx=0,\\ 
        &\int_\Omega (\bm u \cdot\nabla \delta_c)s+\frac{1}{\rm Pe}\nabla \delta_c\cdot\nabla s\dx=\int_{\Gamma_o}{\beta_2(c-c_d)\delta_c {\rm d}\Gamma}.\\
    \end{aligned}\right.
\end{equation*}
By the divergence free property $\nabla\cdot \vu =0$, then it holds that
$\int_\Omega (\bm u \cdot \nabla)\delta_{\bm u}\cdot \vvv \dx = -\int_\Omega (\vu \cdot \nabla)\vvv \cdot \delta_{\bm u}  \dx$ (see \cite[Lemma 2.4]{GarckeHecht2016}). Then, we have
\begin{equation*}
\left\{
    \begin{aligned}
        &\int_{\Omega}\frac{1}{\rm Re}\nabla\delta_{\bm u} :\nabla \vvv+(\delta_{\bm u} \cdot\nabla)\bm u \cdot\vvv-(\vu \cdot \nabla)\vvv \cdot \delta_{\bm u}+\alpha(\phi)\delta_{\bm u}\cdot\vvv+q\nabla\cdot\delta_{\bm u} +(\delta_{\bm u} \cdot\nabla c)s\dx\\ 
        &\quad=\beta_1 \int_\Omega\frac{1}{\rm Re}\nabla\vu: \nabla \delta_{\bm u}+\alpha(\phi)\vu\cdot\delta_{\bm u}\dx,\\ 
        &\int_\Omega\delta_p\nabla\cdot\vvv\dx=0,\\ 
        &\int_\Omega \varepsilon(\phi)\nabla\xi\cdot\nabla\delta_\psi+\rho_0(e^{-\psi}+e^{\psi})\xi\ \delta_\psi-\rho_0(e^{-\psi}+e^\psi)\nabla\psi\cdot\vvv\ \delta_\psi+\rho_0(e^{-\psi}-e^\psi)\nabla\delta_\psi\cdot\vvv\dx=0,\\ 
        &\int_\Omega (\bm u \cdot\nabla \delta_c)s+\frac{1}{\rm Pe}\nabla \delta_c\cdot\nabla s\dx=\int_{\Gamma_o}{\beta_2(c-c_d)\delta_c {\rm d}\Gamma}.\\
    \end{aligned}\right.
\end{equation*}
Hence, we complete the derivation of adjoint problem in weak form.
\end{proof}

Taking the Fr\'{e}chet derivative of the Lagrangian $\mathcal{L}$ defined in \eqref{mixingL} with respect to $\phi$ in the direction $\theta\in H^1(\Omega)$, we have
\begin{equation}\label{varDelStep1}
\begin{aligned}
&\left\langle\beta_1 J_1^\prime(\phi)+\beta_2 J_2^\prime(\phi),\theta\right\rangle\\
=&\left\langle \frac{\partial\mathcal{L}}{\partial\phi}(\phi, \psi, \bm u, p, c; \zeta, \bm v, q, s),\theta \right\rangle+\left\langle \frac{\partial\mathcal{L}}{\partial c}(\phi, \psi, \bm u, p, c; \zeta, \bm v, q, s),\langle c^\prime(\phi),\theta \rangle \right\rangle \\
&+\left\langle \frac{\partial\mathcal{L}}{\partial\bm u}(\phi, \psi, \bm u, p, c; \zeta, \bm v, q, s),\langle\bm u^\prime(\phi),\theta \rangle \right\rangle+\left\langle \frac{\partial\mathcal{L}}{\partial p}(\phi, \psi, \bm u, p, c; \zeta, \bm v, q, s),\langle p^\prime(\phi),\theta \rangle \right\rangle\\
&+\left\langle \frac{\partial\mathcal{L}}{\partial \psi}(\phi, \psi, \bm u, p, c; \zeta, \bm v, q, s),\langle \psi^\prime(\phi),\theta \rangle \right\rangle.
\end{aligned}
\end{equation}
Using the weak formulation of the adjoint problem \eqref{weakAdj}, the last four terms of the right hand side of \eqref{varDelStep1} vanish. Therefore, we derive the variational derivative
\begin{equation}
\left\langle\beta_1 J_1^\prime(\phi)+\beta_2 J_2^\prime(\phi),\theta\right\rangle= \frac{\beta_1}{2} \int_\Omega \frac{1}{\rm Re}\alpha^\prime(\phi)\theta\vert \bm u\vert^2\dx- \int_\Omega  \alpha^\prime(\phi)\theta \bm u\cdot\bm v\dx- \int_\Omega \varepsilon^\prime(\phi)\theta\nabla\psi\cdot\nabla\xi\dx.
\end{equation}

\subsection{Gradient flow method}\label{Gradient flow}
In thermodynamics, the evolution of a system generally proceeds in the direction of decreasing free energy. Depending on the specific process and the type of particle system, free energy can be expressed in various forms, among which the Ginzburg–Landau free energy is the most commonly employed to describe phase transitions. Similarly, in the field of topology optimization, this formulation is often adopted as the iterative scheme for phase-field evolution. Motivated by thermodynamic principles, we regard topology optimization via the phase field method as analogous to a phase transition near the critical temperature, where the sensitivity of the objective function naturally plays the role of an external field. In this view, the optimization process can be interpreted as the system evolving under the combined driving forces of intrinsic free energy and the external field, ultimately reaching a stable equilibrium configuration.

To describe continuous phase transitions in homogeneous media, the Landau–Ginzburg mean-field theory is commonly employed due to its analytical tractability and generality. In this framework, a smooth phase-field variable $\phi$ is introduced to represent the material distribution throughout the design domain (see Fig. \ref{phaseFig}). The evolution of $\phi$ is then driven by the minimization of a Ginzburg–Landau-type free energy functional, and the optimization proceeds by solving the associated Allen–Cahn equation:
\begin{equation}\label{AllenCahn}
\frac{\partial \phi}{\partial t} = -\frac{\delta \mathcal{F}(\phi)}{\delta \phi},
\end{equation}
where $\mathcal{F}(\phi)$ denotes the functional total free energy density and the initial condition and Neumann boundary condition are satisfied.
According to \cite{Goldenfeld1992}, the Ginzburg–Landau free energy originates from a coarse-grained continuum approximation of the Ising model. For completeness, we provide the detailed derivation in Appendix \ref{appendix:IsingGL}. The functional free energy density can be expressed as 
$$\mathcal{F}(\phi)=\frac{\kappa}{2}|\nabla\phi|^2+A(T)\phi^2+B(T)\phi^4+o(\phi^4).$$
Here
$$\kappa:=\frac{a^2k_BT}{2},\quad A(T):=\frac{zk_BT}{2}-\frac{z^2\tilde{J}}{2},\ {\rm and} \ B(T):=\frac{z^4\tilde{J}^2}{12k_BT},$$
% $$\mathcal{F}(\phi)=\frac{a^2k_BT}{4}|\nabla\phi|^2+(\frac{zk_BT}{2}-\frac{z^2J}{2})\phi^2+\frac{z^4J^2}{12k_BT}\phi^4+o(\phi^4),$$
with $T$ represents the temperature, $k_B$ is the Boltzmann constant, $a$ is the lattice spacing constant, $z$ is the coordination number, and $\tilde{J}$ is a positive constant representing the nearest-neighbor coupling strength.  

it is observed that $B(T)>0$ ensures the stability of the model, and the coefficient $A(T)$ changes sign at a critical temperature $T_c:=\frac{z\tilde{J}}{k_B}$. Specifically, for $T>T_c$, one has $A(T)>0$, and the local free energy landscape corresponds to a single-well potential centered at $\phi=0$. However, when $T<T_c$,  $A(T)<0$, the free energy undergoes a qualitative change to a double-well structure. This transition is referred to as spontaneous symmetry breaking: although the microscopic formulation assumes symmetry among spin states, the system spontaneously selects one of the two degenerate minima, thereby breaking the original symmetry.

% In fluid-related topology optimization problems, symmetry breaking phenomena frequently arise as a result of the nonlinear coupling between the design field and the flow field \cite{Garcke2018,Yu2022}.
% Although the initial setup is often symmetric with respect to the domain geometry or boundary conditions, the optimized structure may evolve toward an asymmetric configuration that yields lower energy dissipation or better mixing performance. 
% This behavior is analogous to spontaneous symmetry breaking in phase transition models, where the double-well potential leads the system to settle in one of the energetically favorable states.

% Motivated by this perspective, employing the Ginzburg–Landau free energy as the driving functional for the iterative process becomes a natural modeling choice. The Ginzburg–Landau framework offers a closely related analogy and an advantageous formulation for describing the nonlinear couplings and symmetry-related effects that frequently arise in topology optimization. In this sense, the sensitivity can also be naturally treated as an external field, and therefore we must re-derive the expression of the Ginzburg–Landau free energy in the presence of an external field:
From this viewpoint, we re-derive the Ginzburg–Landau free energy expression in the presence of an external field from the ising model.  
By performing a Taylor expansion around $\phi=0$, a modified Ginzburg–Landau free energy functional can be obtained:
\begin{equation*}
\begin{aligned}
\mathcal{F}(\phi) =& \frac{zk_BT}{2}\phi^2+\frac{a^2k_BT}{4}|\nabla\phi|^2-k_BT\ln[2\cosh(z\sqrt{\frac{\tilde{J}}{k_BT}}\phi+\frac{h}{k_BT})]\\
=&\frac{a^2k_BT}{4}|\nabla\phi|^2-k_BT\ln(2\cosh (\frac{h}{k_BT}))-z\sqrt{\tilde{J}k_BT}\tanh(\frac{h}{k_BT})\phi\\
&+(-\frac{z^2\tilde{J}}{2\cosh^2(\frac{h}{k_BT})}+\frac{zk_BT}{2})\phi^2+z^3\sqrt{\frac{\tilde{J}^3}{k_BT}}\frac{\sinh(\frac{h}{k_BT})}{3\cosh^3(\frac{h}{k_BT})}\phi^3\\
&+\frac{z^4\tilde{J}^2\bigl(1 - 3\tanh^2 (\frac{h}{k_BT})) }{12k_BT\cosh^2(\frac{h}{k_BT})}\phi^4+o(\phi^4),\\
\end{aligned}
\end{equation*}
Here, $h$ denotes the external field. The expression implies that, when an external field is present, the original Ginzburg–Landau free energy expression is modified by the appearance of odd-order terms, together with slight perturbations to the coefficients of the even-order terms.

Therefore, in the standard Ginzburg–Landau free energy, each point of the phase field is driven to select between the two states. When an external field is introduced, odd-order terms arise, which break the symmetry of the double-well potential and render the free energy values at the two minima unequal. Although the system still tends to approach these two critical points, the distribution is altered due to the influence of the odd terms. However, in topology optimization, it is essential to ensure that the distribution function always approaches pure phases $0$ and $1$. Moreover, $\tilde{J}$ represents the free energy difference between distinct states of the phase field; in topology optimization, the state parameter is directly characterized by $\phi$, which means that the magnitude of the odd-order terms is governed by the derivative of the objective function $J’$. To this end, we follow the approach of \cite{Takezawa2010} and adopt the modified double-well potential \eqref{dp}, which shifts the energy minima toward the desired values.
For the external field perturbation term, we further introduce a higher-order interpolation function 
\begin{equation*}
    g(\phi)=\eta\phi^3(6\phi^2-15\phi+10)\frac{J'}{\|J'\|},
\end{equation*}
allowing the objective to act as an external driving force for the evolution of the phase-field function, where the constant $\eta>0$ serves as a tuning parameter to control the strength of this external effect. The modified Ginzburg-Landau free energy density functional is
\begin{equation}
\begin{aligned}
    \mathcal{F}(\phi)&=\frac{\kappa}{2}|\nabla\phi|^2+w(\phi)+g(\phi)\\
    &=\frac{\kappa}{2}|\nabla\phi|^2+\frac{1}{4} \phi^2 (1 - \phi)^2+\eta\phi^3(6\phi^2-15\phi+10)\frac{J'}{\|J'\|},\\
\end{aligned}
\end{equation}
where in each time step the sensitivity analysis indicates that $J'$ does not explicitly involve the state variable $\phi$, so the derivative of this term with respect to $\phi$ is zero.
The influence of the objective on the distribution of phase-field function is illustrated in Fig. \ref{GLFreeEnergy}.
\begin{figure}[htbp]
    \centering
    \includegraphics[width=0.9\linewidth]{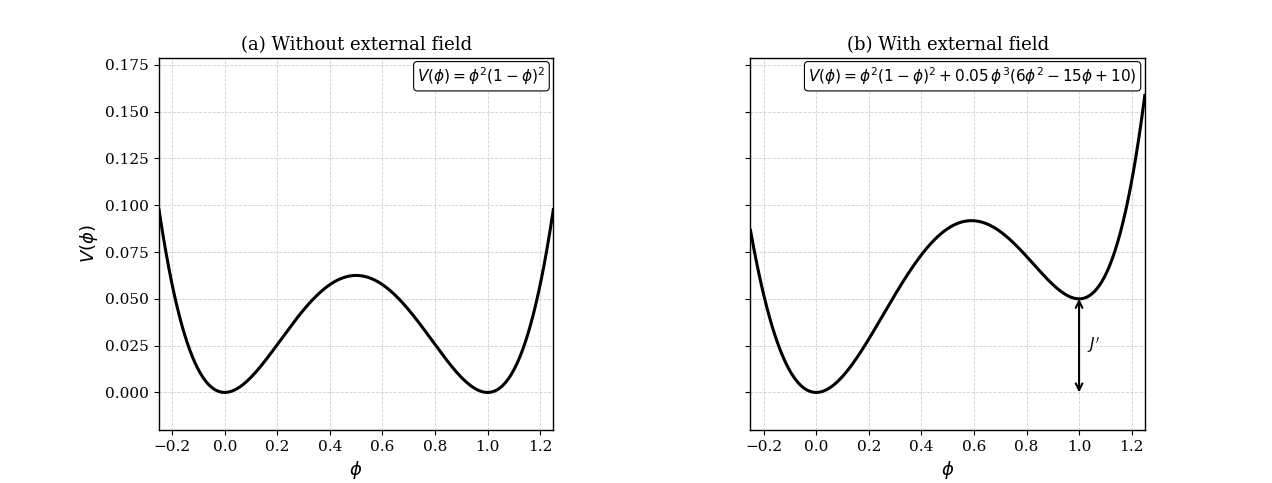}
    \caption{Ginzburg-Landau free energy under different conditions}
    \label{GLFreeEnergy}
\end{figure}

Applying the Allen-Cahn equation (\ref{AllenCahn}), for the general form of the problems discussed in Section \ref{Phase-field models}, we have
\begin{equation*}
\begin{aligned}
    \frac{\partial \phi}{\partial t} %&= -\frac{\delta \mathcal{F}[\phi]}{\delta \phi}\\
%    &=\frac{\delta [\frac{\kappa}{2} |\nabla \phi|^2 + \frac{1}{4} \phi^2 (1 - \phi)^2 + \eta \, \phi^3 (6\phi^2 - 15\phi + 10)  \frac{\beta_1 J_1' + \beta_2 J_2'}{\|\beta_1 J_1' + \beta_2 J_2'\|} + \frac{\beta_3}{2} (V(\phi) - V_0)^2]}{\delta\phi}\\
    =\kappa\Delta\phi+\phi(1-\phi)r(\phi)+\beta_3(V(\phi)-V_0),
\end{aligned}
\end{equation*}
where $$
r(\phi)=\phi-\frac{1}{2}-30\eta(1-\phi)\phi\frac{\beta_1 J_1' + \beta_2 J_2'}{\|\beta_1 J_1' + \beta_2 J_2'\|}.
$$
In the numerical implementation, let $\phi^n$ denote the phase distribution at $t^n=n\tau$, $\tau$ be the time step.% and $r(\phi^n)=\phi^n-\frac{1}{2}-30\eta(1-\phi^n)\phi^n\frac{\beta_1 J_1' + \beta_2 J_2'}{\|\beta_1 J_1' + \beta_2 J_2'\|}$. 
We adopt the following semi-implicit scheme:
\begin{equation}\label{ModifiedAC}
    \begin{aligned}
        \frac{\phi^{n+1}-\phi^n}{\tau}=\kappa\Delta\phi^{n+1}+\beta_3(V(\phi^n)-V_0)+\begin{cases}
\phi^{n+1} \left(1 - \phi^n\right) r\left(\phi^n\right), & \text{if } r(\phi^n) \leq 0, \\
\phi^n \left(1 - \phi^{n+1}\right) r\left(\phi^n\right), & \text{if } r(\phi^n) > 0,
\end{cases}
    \end{aligned}
\end{equation}
where $J_1^\prime$ and $J_2^\prime$ are treated explicitly. According to \cite{Takezawa2010}, the above discretization guarantees that $\phi$ remains in the interval $[0,1]$ for a large time step.

\section{Numerical simulation}   \label{Numerical examples}

In this section, we first verify our scheme with an energy‑dissipation benchmark. Two-dimensional and three-dimensional numerical examples are tested. After the state and adjoint variables are obtained, the Allen–Cahn phase-field equation is advanced for $N_p$ steps according to \eqref{ModifiedAC}. We then apply the scheme to a mixer problem in which two fluids enter through the inlet and should be mixed as uniformly as possible on the outlet. The corresponding implementation procedures for the various coupled‑physics settings are listed in Algorithm \ref{algPB}. For both problems, we adopt a fixed total iteration number $N$ as the stopping criterion to show that the proposed algorithm converges monotonically without oscillation over long runs. The steady incompressible Navier–Stokes equations are solved using Newton's method. For the case of the coupled electrical field, we also use Newton's method to solve the Poisson-Boltzmann equation. 

For finite element discretization, continuous piecewise linear elements enriched with bubble functions ($P_1$-bubble element) are employed for the velocity field and its adjoint variable, while standard linear elements ($P_1$ element) are used for the pressure, concentration, electric potential, phase-field variable, and their corresponding adjoint quantities. Numerical experiments are performed in FreeFem++ on a computer equipped with an Apple M4 processor and 32 GB of RAM.

\begin{algorithm}[h!]
\SetAlgoLined
    \caption{Topology optimization for mixer problem}\label{algPB}
    \KwData{Given the maximum iteration number $N$, iteration time $n=0$\\
    Initialize phase-field function $\phi_0$}
    \While{$n \leq N$}{
     \, \emph{Step 1}: Solve the Poisson-Boltzmann equation;\\ 
    \ \emph{Step 2}:  Solve the Navier-Stokes equations; \\
    \ \emph{Step 3}: Solve the convection-diffusion equation;\\
    \ \emph{Step 4}: Solve adjoint system;\\
    \ \emph{Step 5}: Compute the gradient of the objective; \\
\ \emph{Step 6}: Update the phase-field function via Allen-Cahn gradient flow;\\
     $n \leftarrow n+1$}
\end{algorithm}
\FloatBarrier

\subsection{Energy dissipation problem}
We first test Algorithm \ref{algPB} by solving energy dissipation problems constrained by the Navier-Stokes equation \eqref{NS} in 2D and 3D. Set $\beta_1=1, \beta_2=0, N_p=3$.
%\subsubsection{Energy dissipation problem for 2D domain}

$\rm \bf Example\ 1:$ The first example is the shape design for the bypass which has application in the aorto-coronaric bypass anastomoses. Set $\Omega=[0,1.8]\times[-0.5,0.5]$ (see Fig. \ref{energy dissipation domain for example 1}) with two inlets and two outlets, and a no-slip condition is imposed on the other boundary. The prescribed inflow velocity is imposed as ${\bm u_0}=[20(0.35-y)(0.35+y),0]^{\rm T}$. Set the target volume to be $V_0=1.25$ and choose the initial phase-field function $\phi_0(x,y)=\min\{|y-0.35|-0.15,|y+0.35|-0.15\}$. With the parameters: ${\rm Re=13.69}, \alpha=10^2,\kappa=5\times 10^{-3}, {\tau}=10^{-3}, \eta=75, \beta_3=3.5\times 10^2, N=40$, the optimized distributions (red: fluid and blue: solid) with the velocity profiles and the pressure contours are exhibited in Fig. \ref{Optimization distribution with the correspond-
ing velocity profiles for example 1} and Fig. \ref{Pressure distribution with the corresponding velocity profiles for example 1}.

\begin{figure}[htbp]
\centering

\subfigure[Design domain]{
    \begin{minipage}[t]{0.45\linewidth}
    \centering
    \begin{tikzpicture}[yscale=0.7,xscale=1.4]
    \draw (0,0) rectangle (3,3);
    \draw[arrows = {-Stealth[width=3pt, length=3pt]}](0,2.475)--(0.16,2.475);
    \draw[arrows = {-Stealth[width=3pt, length=3pt]}](0,2.35)--(0.16,2.35);
    \draw[arrows = {-Stealth[width=3pt, length=3pt]}](0,2.225)--(0.16,2.225);
    % \draw[arrows = {-Stealth[width=3pt, length=3pt]}](0,2.1)--(0.05,2.1);
    \draw[arrows = {-Stealth[width=3pt, length=3pt]}](0,0.525)--(0.16,0.525);
    \draw[arrows = {-Stealth[width=3pt, length=3pt]}](0,0.65)--(0.16,0.65);
    \draw[arrows = {-Stealth[width=3pt, length=3pt]}](0,0.775)--(0.16,0.775);
    
    \draw[arrows = {-Stealth[width=3pt, length=3pt]}](3,2.7)--(3.2,2.7);
    \draw[arrows = {-Stealth[width=3pt, length=3pt]}](3,2.46)--(3.2,2.46);
    \draw[arrows = {-Stealth[width=3pt, length=3pt]}](3,2.23)--(3.2,2.23);
    \draw[arrows = {-Stealth[width=3pt, length=3pt]}](3,2.0)--(3.2,2.0);
    \draw[arrows = {-Stealth[width=3pt, length=3pt]}](3,0.3)--(3.2,0.3);
    \draw[arrows = {-Stealth[width=3pt, length=3pt]}](3,0.53)--(3.2,0.53);
    \draw[arrows = {-Stealth[width=3pt, length=3pt]}](3,0.76)--(3.2,0.76);
    \draw[arrows = {-Stealth[width=3pt, length=3pt]}](3,1)--(3.2,1);
    
    \draw[<->={-Stealth[width=3pt, length=3pt]}](-0.05,1.05)--(-0.05,1.95);
    \draw (-0.2,1.5) node[font=\small, scale=0.8]{$0.4$};
    \draw[<->={-Stealth[width=3pt, length=3pt]}](-0.05,2)--(-0.05,2.6);
    \draw (-0.25,2.3) node[font=\small, scale=0.8]{$0.15$};
    \draw[<->={-Stealth[width=3pt, length=3pt]}](-0.05,0.95)--(-0.05,0.35);
    \draw (-0.25,0.65) node[font=\small, scale=0.8]{$0.15$};
    
    \draw[<->={-Stealth[width=3pt, length=3pt]}](0,-0.2)--(3,-0.2);
    \draw (1.5,-0.5) node[font=\small, scale=0.8]{$1.8$};
    
    \draw[<->={-Stealth[width=3pt, length=3pt]}](2.95,2.0)--(2.95,2.7);
    \draw (2.75,2.35) node[font=\small, scale=0.8]{$0.2$};
    \draw[<->={-Stealth[width=3pt, length=3pt]}](2.95,0.3)--(2.95,1.0);
    \draw (2.75,0.6) node[font=\small, scale=0.8]{$0.2$};
    \draw[<->={-Stealth[width=3pt, length=3pt]}](2.95,1.95)--(2.95,1.05);
    \draw (2.75,1.5) node[font=\small, scale=0.8]{$0.4$};

    \end{tikzpicture}
     % \caption{The design domain for energy dissipation problem.}
     \label{energy dissipation domain for example 1}
 
    \end{minipage}%
    }
    \hfill
    \subfigure[Objective for Example 1]{
    \begin{minipage}[t]{0.4\linewidth}
        \centering
        \includegraphics[width=0.8\textwidth]{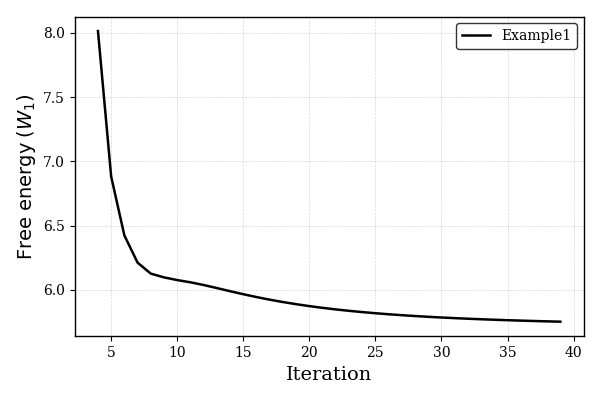}
        % \caption{Energy of 2d energy dissipation}
        \label{DissipationEnergy1}
    \end{minipage}
    }
    \vspace{1em}
    \subfigure[Initial field]{
    \begin{minipage}[t]{0.31\linewidth}
        \centering
        \includegraphics[width=0.8\textwidth]{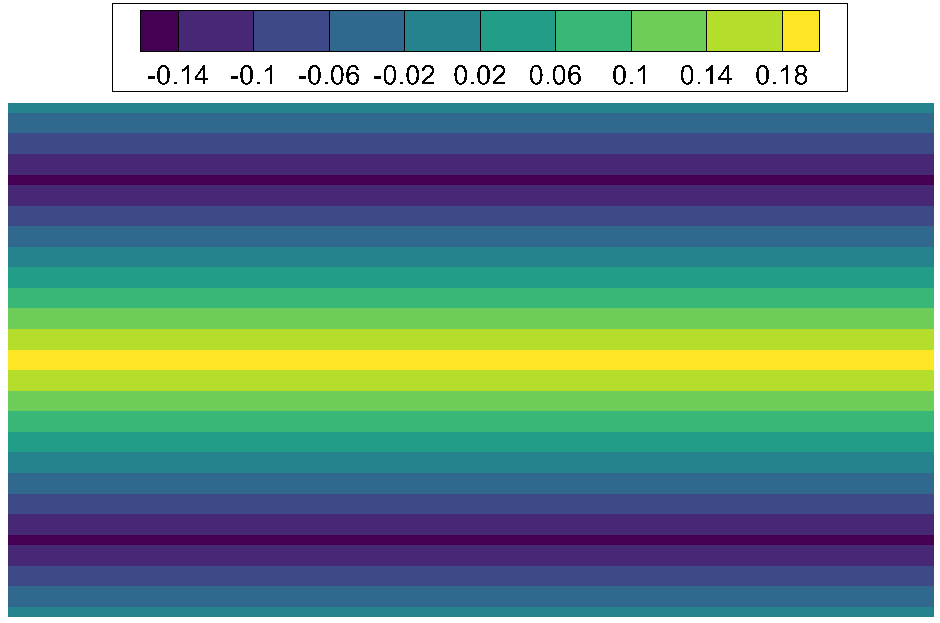}
    \end{minipage}
    }
    \hfill
    \subfigure[Pressure distribution]{
    \begin{minipage}[t]{0.31\linewidth}
        \centering
        \includegraphics[width=0.8\textwidth]{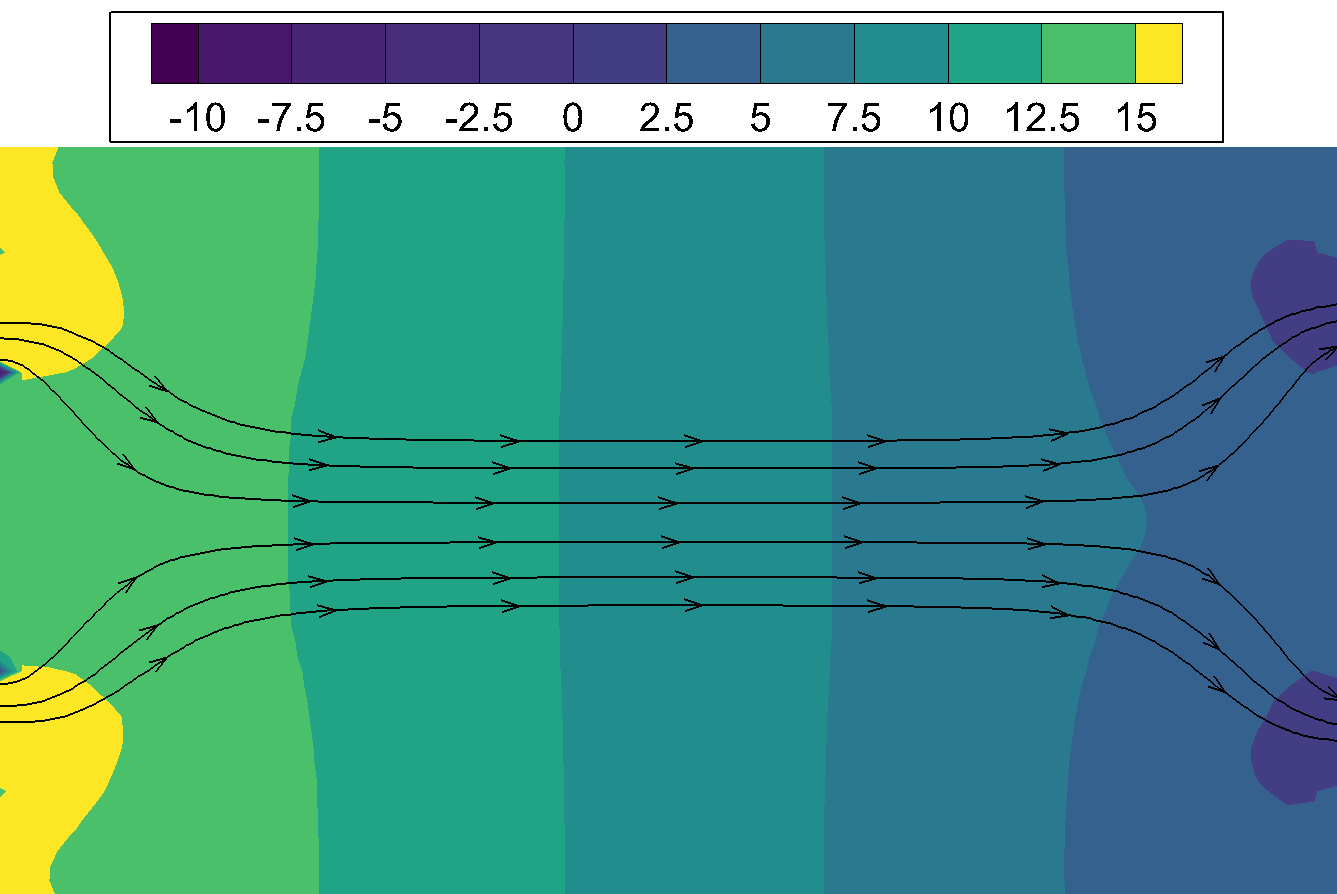}
        
        \label{Pressure distribution with the corresponding velocity profiles for example 1}
    \end{minipage}
}
    \hfill
    \subfigure[Optimized design with velocity]{
    \begin{minipage}[t]{0.31\linewidth}
        \centering
        \includegraphics[width=0.8\textwidth]{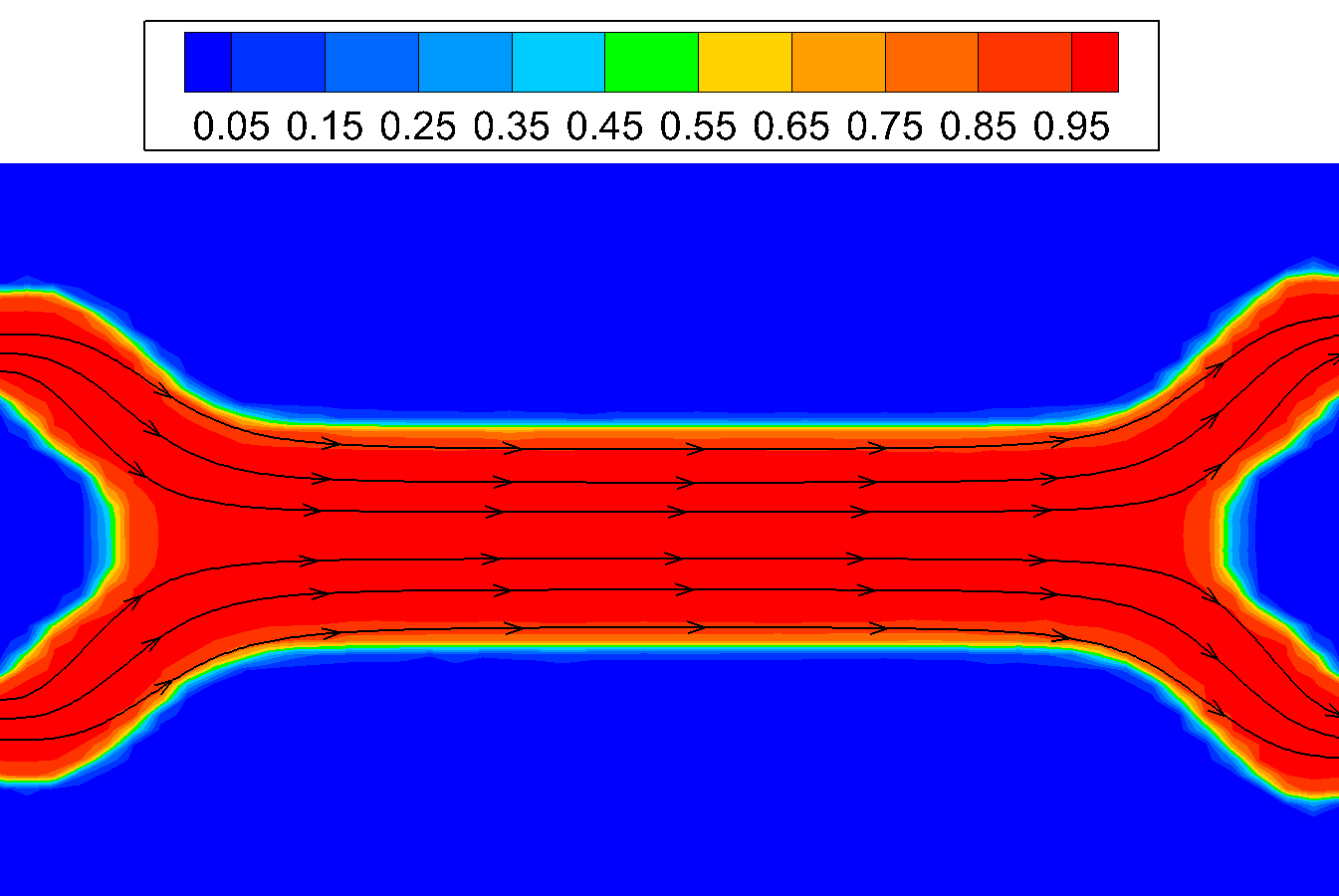}
        
        \label{Optimization distribution with the correspond-
ing velocity profiles for example 1}
    \end{minipage}%
    }
\label{EnergyDissiExample1}
\caption{Example 1 for the energy dissipation problem}
\end{figure}

$\rm \bf Example\ 2:$ The second example is a shape design for the diffuser. The design domain is set to be a square $\Omega=[0,1]\times[-0.5,0.5]$ (see Fig. \ref{energy dissipation domain for example 2}). The flow on the inlet is imposed as ${\bm u_0}=[2(1-y)(1+y),0]^{\rm T}$, for which a parabolic inflow velocity profile is imposed on the left boundary. The homogeneous Neumann boundary condition of fluid is imposed on the outlet of the right boundary. We set the target volume of solid to be $V_0=0.35$ and the initial phase-field function to be $\phi_0(x,y)=\min\{|y-0.45|-0.225,|y+0.45|-0.225\}$. The parameters are set: ${\rm Re}=13.69, \alpha=10^2, \kappa=5\times 10^{-3}, {\tau}=5\times 10^{-3}, \eta=75, \beta_3=6\times 10^2, N=40$, the optimized distributions and the pressure contours are shown in Fig. \ref{Optimization distribution with the correspond-
ing velocity profiles for example 2} and Fig. \ref{Pressure distribution with the correspond-
ing velocity profiles for example 2}.

\begin{figure}[htbp]
\centering
\subfigure[Design domain]{
    \begin{minipage}[t]{0.25\linewidth}
    \centering
    
    \begin{tikzpicture}[yscale=1.1,xscale=1.1]
    \draw (0,0) rectangle (3,3);
    \draw[dashed] (0,3.0)..controls(0.8,1.5)..(0,0);
    %\draw[dashed] (3,2.5)..controls(3.3,1.5)..(3,0.5);
    \draw[arrows = {-Stealth[width=3pt, length=3pt]}](0,2.7)--(0.15,2.7);
    \draw[arrows = {-Stealth[width=3pt, length=3pt]}](0,2.4)--(0.3,2.4);
    \draw[arrows = {-Stealth[width=3pt, length=3pt]}](0,2.0)--(0.5,2.0);
    \draw[arrows = {-Stealth[width=3pt, length=3pt]}](0,1.5)--(0.6,1.5);
    \draw[arrows = {-Stealth[width=3pt, length=3pt]}](0,1.0)--(0.5,1.0);
    \draw[arrows = {-Stealth[width=3pt, length=3pt]}](0,0.6)--(0.3,0.6);
    \draw[arrows = {-Stealth[width=3pt, length=3pt]}](0,0.3)--(0.15,0.3);

    % \draw[arrows = {-Stealth[width=3pt, length=3pt]}](3,2.4)--(3.3,2.4);
    \draw[arrows = {-Stealth[width=3pt, length=3pt]}](3,2.25)--(3.3,2.25);
    \draw[arrows = {-Stealth[width=3pt, length=3pt]}](3,1.95)--(3.3,1.95);
    \draw[arrows = {-Stealth[width=3pt, length=3pt]}](3,1.65)--(3.3,1.65);
    \draw[arrows = {-Stealth[width=3pt, length=3pt]}](3,1.35)--(3.3,1.35);
    \draw[arrows = {-Stealth[width=3pt, length=3pt]}](3,1.05)--(3.3,1.05);
    \draw[arrows = {-Stealth[width=3pt, length=3pt]}](3,0.75)--(3.3,0.75);
    
    \draw[<->={-Stealth[width=3pt, length=3pt]}](0,-0.1)--(3,-0.1);
    \draw (1.5,-0.25) node[font=\small, scale=0.8]{$1.0$};

    \draw[<->={-Stealth[width=3pt, length=3pt]}](-0.1,0)--(-0.1,3.0);
    \draw (-0.3,1.5) node[font=\small, scale=0.8]{$1.0$};

    \draw[<->={-Stealth[width=3pt, length=3pt]}](2.9,0.75)--(2.9,2.25);
    \draw (2.7,1.5) node[font=\small, scale=0.8]{$0.5$};
    
    % \draw[<->={-Stealth[width=3pt, length=3pt]}](2.95,2.0)--(2.95,2.7);
    % \draw (2.75,2.35) node[font=\small, scale=0.8]{$0.2$};
    % \draw[<->={-Stealth[width=3pt, length=3pt]}](2.95,0.3)--(2.95,1.0);
    % \draw (2.75,0.6) node[font=\small, scale=0.8]{$0.2$};
    % \draw[<->={-Stealth[width=3pt, length=3pt]}](2.95,1.95)--(2.95,1.05);
    % \draw (2.75,1.5) node[font=\small, scale=0.8]{$0.4$};
    \end{tikzpicture}
     % \caption{The design domain for energy dissipation problem.}
     \label{energy dissipation domain for example 2}
 
    \end{minipage}%
}
    \hfill
    \subfigure[Objective for Example 2]{
    \begin{minipage}[t]{0.45\linewidth}
        \centering
        \includegraphics[width=0.9\textwidth]{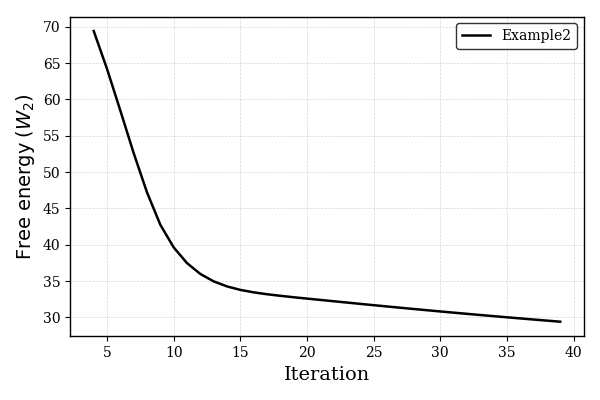}
        % \caption{Energy of 2d energy dissipation}
        \label{DissipationEnergy2}
    \end{minipage}
    }
    \vspace{1em}
    \subfigure[Initial field]{
    \begin{minipage}[t]{0.31\linewidth}
        \centering
        \includegraphics[width=0.7\textwidth]{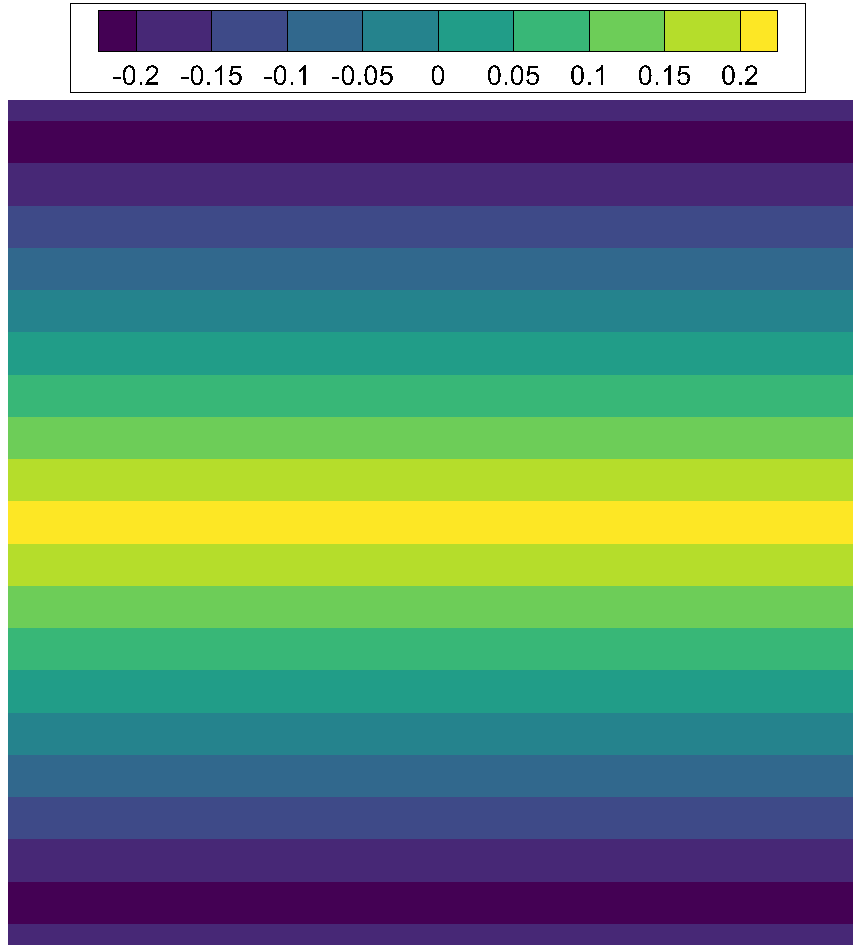}
    \end{minipage}
    }
    \hfill
    \subfigure[Pressure distribution]{
    \begin{minipage}[t]{0.3\linewidth}
        \centering
        \includegraphics[width=0.7\textwidth]{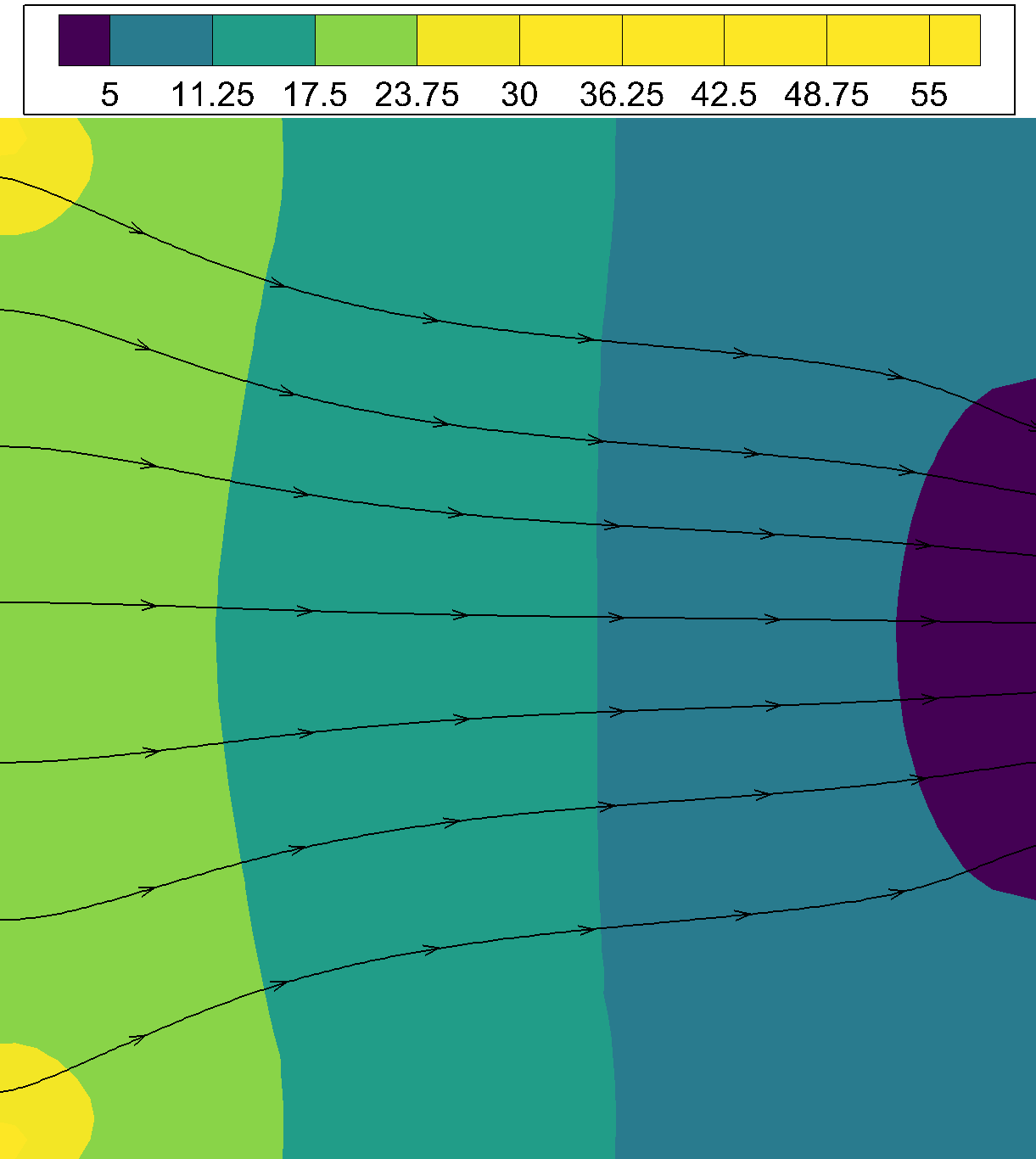}
        
        \label{Pressure distribution with the correspond-
ing velocity profiles for example 2}
    \end{minipage}
}
    \hfill
    \subfigure[Optimized design with velocity]{
    \begin{minipage}[t]{0.31\linewidth}
        \centering
        \includegraphics[width=0.7\textwidth]{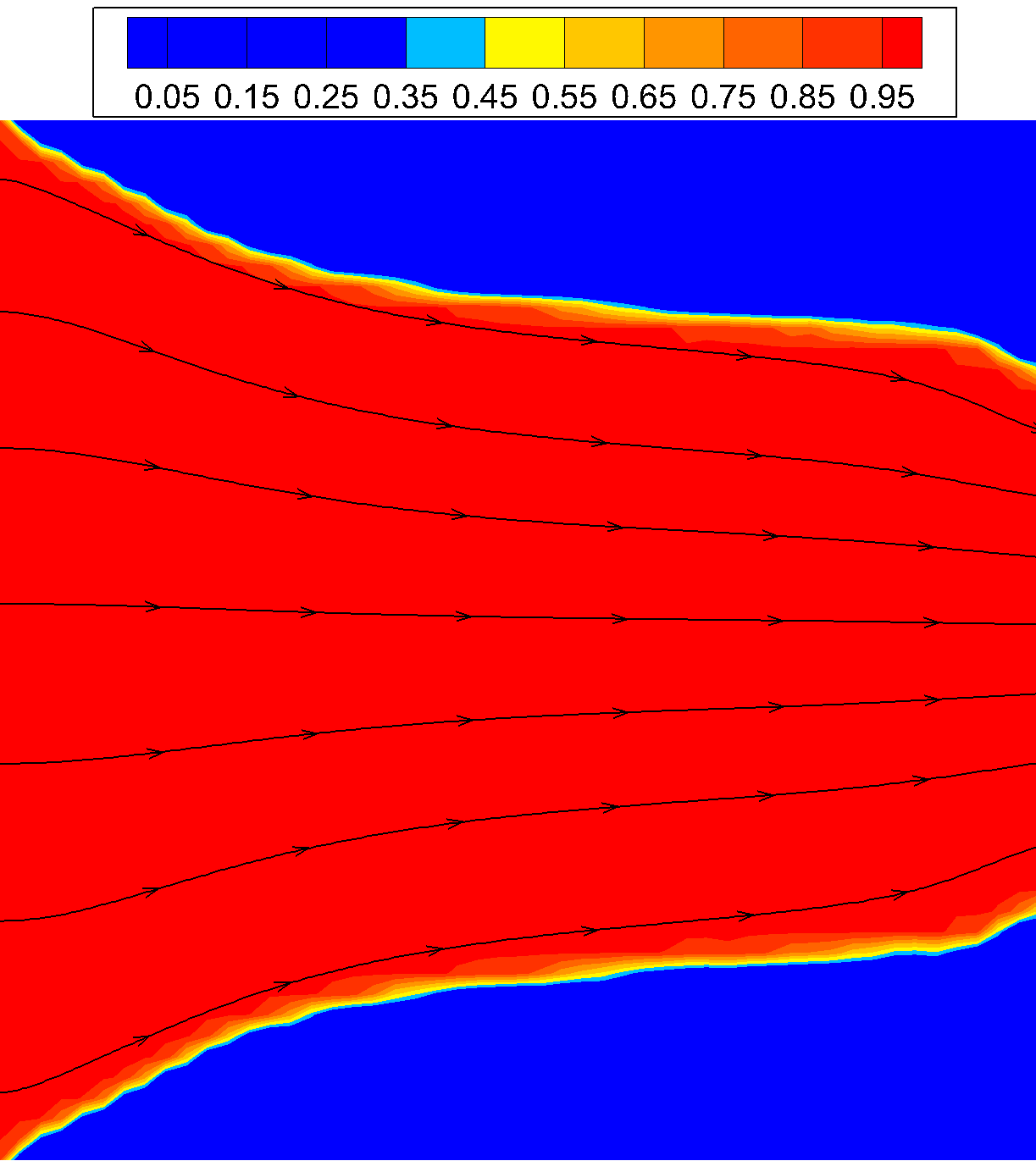}
        
        \label{Optimization distribution with the correspond-
ing velocity profiles for example 2}
    \end{minipage}%
    }
    \label{Example2}
\caption{Results for the energy dissipation problem: Example 2.}
\end{figure}

$\rm \bf Example\ 3:$ We perform the shape design for pipe bending in a square $\Omega=[0,1]\times[0,1]$ as the third example. The inflow fluid velocity is set to be ${\bm u_0}=[10(0.85-y)(y-0.5),0]^{\rm T}$ on the inlet and the target volume is $V_0=0.78$. We impose the parabolic inflow of fluid through an opening located at the upper-left side of the square domain, with an outflow occurring through another opening situated at the lower-right side (see Fig. \ref{EnergyDissipationDomainForExample3}). The initial phase-field function is set to be $\phi_0=1-|x+y-1|$ with the parameters: ${\rm Re=13.69}, \alpha=10^2, \kappa=5\times 10^{-3}, {\tau}=5\times 10^{-4}, \eta=75, \beta_3=6\times 10^2, N=40$. We present the optimized distributions with the corresponding velocity profile and pressure contour shown in Fig. \ref{Optimization distribution with the correspond-
ing velocity profiles for example 3} and Fig. \ref{Pressure distribution with the correspond-
ing velocity profiles for example 3}.

\begin{figure}[htbp]
\centering
\subfigure[Design domain]{
    \begin{minipage}[t]{0.25\linewidth}
    \centering
    \begin{tikzpicture}[yscale=1.1,xscale=1.1]
    \draw (0,0) rectangle (3,3);
    % % \draw (0,2.7)..controls(0.3,2.2)..(0.7,1.9);
    % % \draw (0,0.3)..controls(0.3,0.8)..(0.7,1.1);
    % % \draw (0,2)..controls(0.2,1.7)..(0.5,1.5);
    % % \draw (0,1)..controls(0.2,1.3)..(0.5,1.5);
    % % \draw (2.3,1.9)..controls(2.7,2.2)..(3,2.7);
    % % \draw (2.3,1.1)..controls(2.7,0.8)..(3,0.3);
    % % \draw (3,2)..controls(2.8,1.7)..(2.5,1.5);
    % % \draw (3,1)..controls(2.8,1.3)..(2.5,1.5);
    % % \draw (0.7,1.9)--(2.3,1.9);
    % % \draw (0.7,1.1)--(2.3,1.1);
    \draw[dashed] (0,2.6)..controls(0.3,2.2)..(0,1.8);
    % %\draw[dashed] (0,0.3)..controls(0.2,0.65)..(0,1);
    \draw[arrows = {-Stealth[width=3pt, length=3pt]}](0,2.4)--(0.12,2.4);
    \draw[arrows = {-Stealth[width=3pt, length=3pt]}](0,2.3)--(0.18,2.3);
    \draw[arrows = {-Stealth[width=3pt, length=3pt]}](0,2.2)--(0.225,2.2);
    \draw[arrows = {-Stealth[width=3pt, length=3pt]}](0,2.1)--(0.18,2.1);
    \draw[arrows = {-Stealth[width=3pt, length=3pt]}](0,2.0)--(0.12,2.0);
    % \draw[arrows = {-Stealth[width=3pt, length=3pt]}](0,0.525)--(0.12,0.525);
    % \draw[arrows = {-Stealth[width=3pt, length=3pt]}](0,0.65)--(0.16,0.65);
    % \draw[arrows = {-Stealth[width=3pt, length=3pt]}](0,0.775)--(0.12,0.775);

    \draw[arrows = {-Stealth[width=3pt, length=3pt]}](2.5,0)--(2.5,-0.2);
    \draw[arrows = {-Stealth[width=3pt, length=3pt]}](2.4,0)--(2.4,-0.2);
    \draw[arrows = {-Stealth[width=3pt, length=3pt]}](2.3,0)--(2.3,-0.2);
    \draw[arrows = {-Stealth[width=3pt, length=3pt]}](2.2,0)--(2.2,-0.2);
    \draw[arrows = {-Stealth[width=3pt, length=3pt]}](2.1,0)--(2.1,-0.2);
    \draw[arrows = {-Stealth[width=3pt, length=3pt]}](2.0,0)--(2.0,-0.2);
    \draw[arrows = {-Stealth[width=3pt, length=3pt]}](1.9,0)--(1.9,-0.2);
    % \draw[arrows = {-Stealth[width=3pt, length=3pt]}](3,0.76)--(3.2,0.76);
    % \draw[arrows = {-Stealth[width=3pt, length=3pt]}](3,1)--(3.2,1);

    \draw[<->={-Stealth[width=3pt, length=3pt]}](0,3.1)--(3,3.1);
    \draw (1.5,3.25) node[font=\small, scale=0.8]{$1.0$};

    \draw[<->={-Stealth[width=3pt, length=3pt]}](3.1,0)--(3.1,3.0);
    \draw (3.3,1.5) node[font=\small, scale=0.8]{$1.0$};

    \draw[<->={-Stealth[width=3pt, length=3pt]}](-0.1,1.8)--(-0.1,2.6);
    \draw (-0.3,2.2) node[font=\small, scale=0.8]{$0.35$};

    \draw[<->={-Stealth[width=3pt, length=3pt]}](1.8,0.1)--(2.6,0.1);
    \draw (2.2,0.3) node[font=\small, scale=0.8]{$0.35$};
    % \draw[<->={-Stealth[width=3pt, length=3pt]}](-0.05,1.05)--(-0.05,1.95);
    % \draw (-0.2,1.5) node[font=\small, scale=0.8]{$0.4$};
    % \draw[<->={-Stealth[width=3pt, length=3pt]}](-0.05,2)--(-0.05,2.6);
    % \draw (-0.25,2.3) node[font=\small, scale=0.8]{$0.15$};
    % \draw[<->={-Stealth[width=3pt, length=3pt]}](-0.05,0.95)--(-0.05,0.35);
    % \draw (-0.25,0.65) node[font=\small, scale=0.8]{$0.15$};
    
    % \draw[<->={-Stealth[width=3pt, length=3pt]}](0,-0.2)--(3,-0.2);
    % \draw (1.5,-0.5) node[font=\small, scale=0.8]{$1.8$};
    
    % \draw[<->={-Stealth[width=3pt, length=3pt]}](2.95,2.0)--(2.95,2.7);
    % \draw (2.75,2.35) node[font=\small, scale=0.8]{$0.2$};
    % \draw[<->={-Stealth[width=3pt, length=3pt]}](2.95,0.3)--(2.95,1.0);
    % \draw (2.75,0.6) node[font=\small, scale=0.8]{$0.2$};
    % \draw[<->={-Stealth[width=3pt, length=3pt]}](2.95,1.95)--(2.95,1.05);
    % \draw (2.75,1.5) node[font=\small, scale=0.8]{$0.4$};
    \end{tikzpicture}
     % \caption{The design domain for energy dissipation problem.}
     \label{EnergyDissipationDomainForExample3}

    \end{minipage}%
}
    \hfill
    \subfigure[Objective for Example 3]{
    \begin{minipage}[t]{0.45\linewidth}
        \centering
        \includegraphics[width=0.85\textwidth]{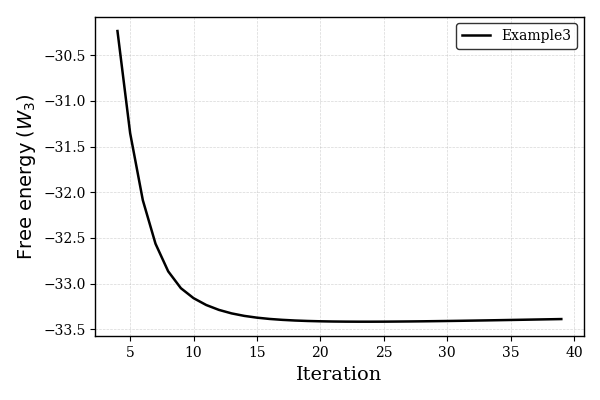}
        % \caption{Energy of 2d energy dissipation}
        \label{DissipationEnergy3}
    \end{minipage}
    }
   % \vspace{1em}
    \subfigure[Initial field]{
    \begin{minipage}[t]{0.31\linewidth}
        \centering
        \includegraphics[width=0.7\textwidth]{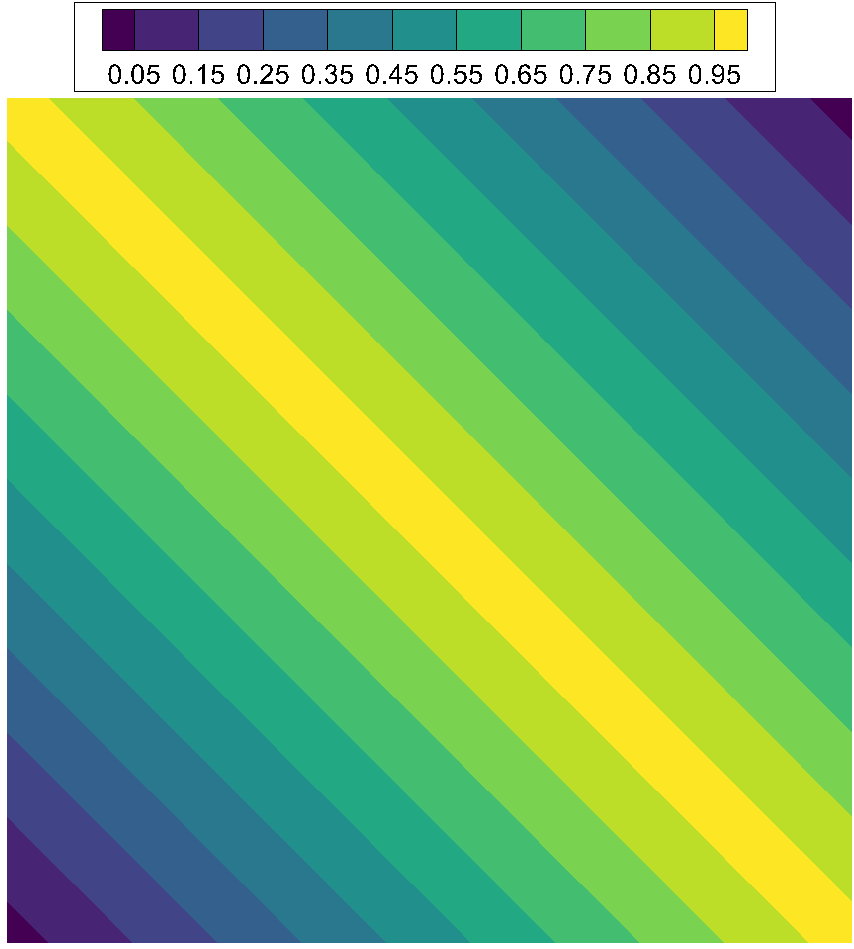}
    \end{minipage}
    }
    \hfill
    \subfigure[Pressure distribution]{
    \begin{minipage}[t]{0.3\linewidth}
        \centering
        \includegraphics[width=0.7\textwidth]{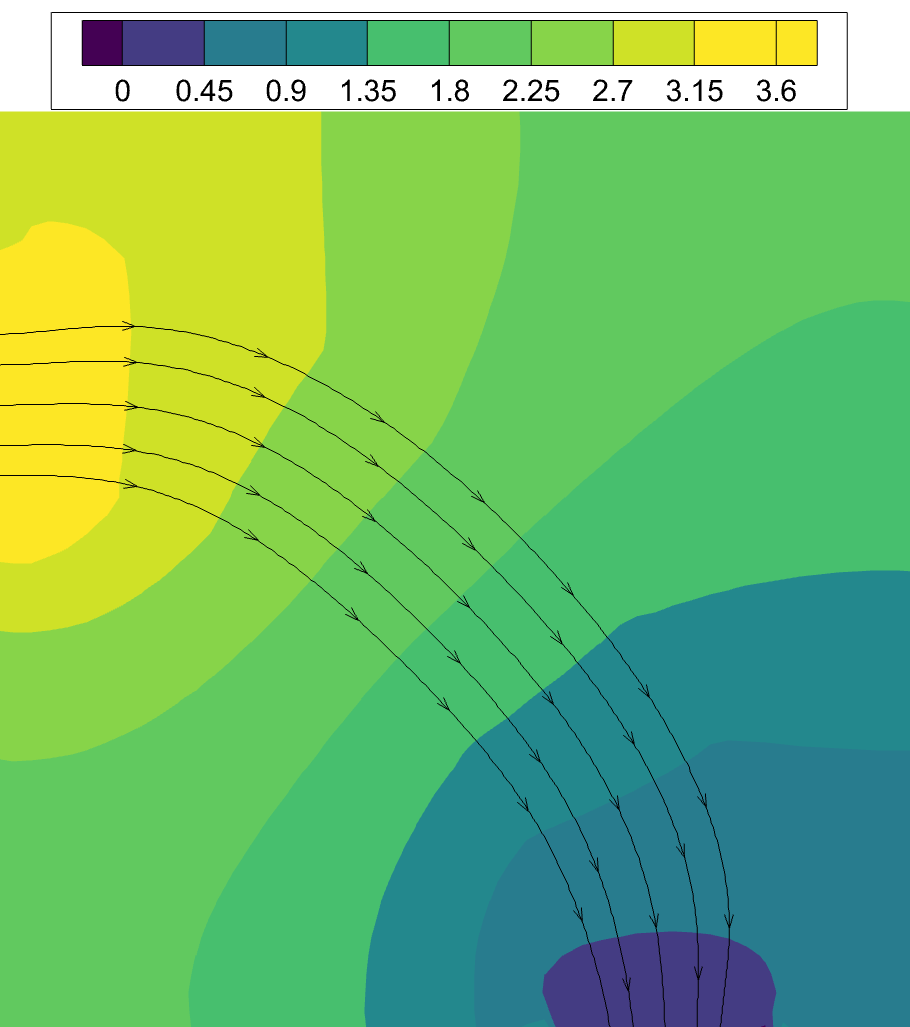}
        
        \label{Pressure distribution with the correspond-
ing velocity profiles for example 3}
    \end{minipage}
}
    \hfill
    \subfigure[Optimized design with velocity]{
    \begin{minipage}[t]{0.31\linewidth}
        \centering
        \includegraphics[width=0.7\textwidth]{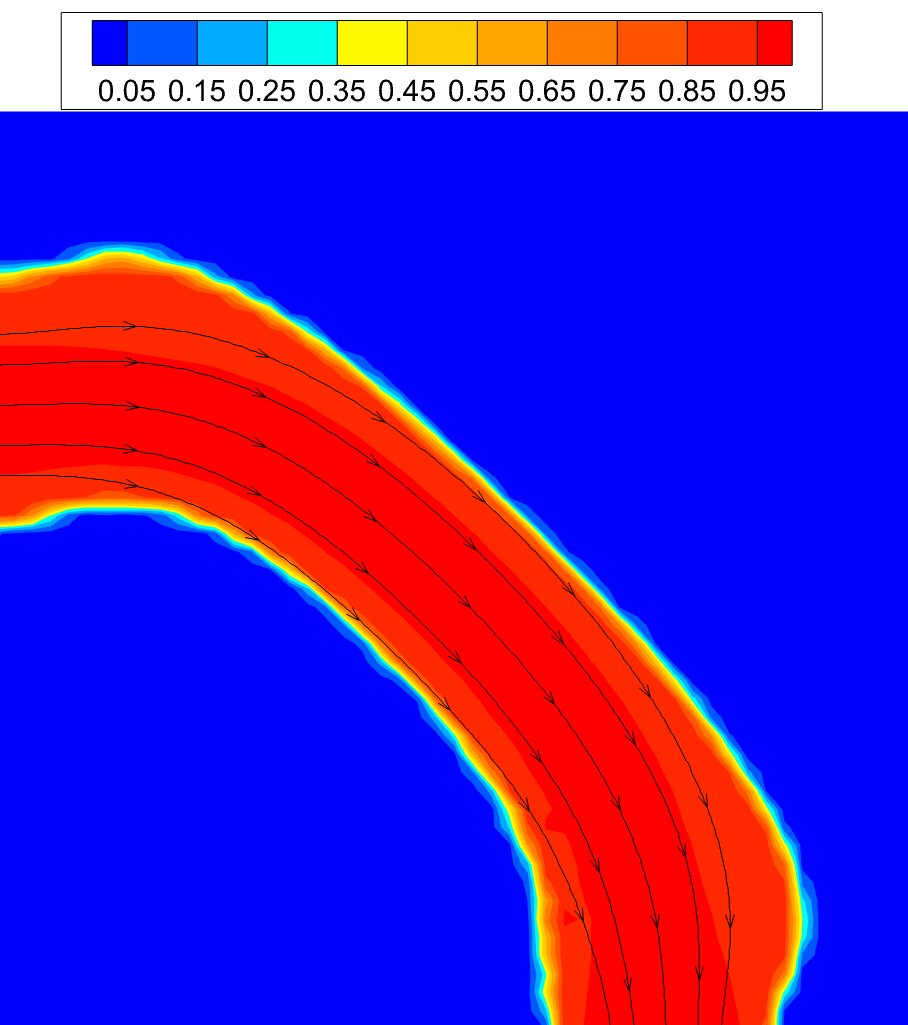}
        
        \label{Optimization distribution with the correspond-
ing velocity profiles for example 3}
    \end{minipage}%
    }
    \label{EnergyDissiExample3}
\caption{Results for the energy dissipation problem: Example 3.}
\end{figure}

In the above three energy dissipation problems, all computations are performed using a discretization of finite element mesh with $125 \times 150$ triangular elements, and the evolution of the phase-field is governed by a modified Ginzburg–Landau free energy functional, which is consistently expressed as \eqref{W1}. In addition, we present the convergence histories of free energy in Fig. \ref{DissipationEnergy1}, \ref{DissipationEnergy2} and \ref{DissipationEnergy3}, respectively. Both the free energy of the phase-field function and the objective function tend to be flat at the end, and the volume errors for the three examples are limited to $3\%$ of the total volume during the whole iteration.

The results show that the objective function decreases smoothly to a minimum, demonstrating the effectiveness of the proposed modified formulation across different optimization scenarios. To further validate the stability of the algorithm, we next apply it to a more complex three-dimensional problem.

% \begin{figure}[htbp]
%     \subfigure[Objective for Example 1]{
%     \begin{minipage}[t]{0.31\linewidth}
%         \centering
%         \includegraphics[width=1.0\textwidth]{figures/FreeEnergy1.png}
%         % \caption{Energy of 2d energy dissipation}
%         \label{DissipationEnergy1}
%     \end{minipage}
%     }
%     \hfill
%     \subfigure[Objective for Example 2]{
%     \begin{minipage}[t]{0.31\linewidth}
%         \centering
%         \includegraphics[width=1.0\textwidth]{figures/FreeEnergy2.png}
%         % \caption{Obj of 2d energy dissipation}
%         \label{DissipationEnergy2}
%     \end{minipage}
%     }
%     \hfill
%     \subfigure[Objective for Example 3]{
%     \begin{minipage}[t]{0.31\linewidth}
%         \centering
%         \includegraphics[width=1.0\textwidth]{figures/FreeEnergy3.png}
%         % \caption{Vol of 2d energy dissipation}
%         \label{DissipationEnergy3}
%     \end{minipage}
%     }
%     \caption{Convergence histories of 2D energy dissipation problem}
%     \label{ConvergenceHistoriesOfEnergyDissipation2d}
% \end{figure}

%\subsubsection{Energy dissipation problem for 3D domain}
Next we test the topology optimization problem of 3D bypass. The example is solved by Algorithm \ref{algPB} and also omits the Poisson-Boltzmann and the convection–diffusion equations. The design domains are presented in Fig. \ref{DesignDomainFor3dEnergyDissi}.

% \textbf{Example 4} : In the three-dimensional case, we first test a bypass problem with a single inlet and two symmetric outlets. The design domain is a rectangular box of size $2 \times 1 \times 0.3$. The fluid enters through the middle segment of the left boundary with a prescribed velocity ${\bm u}_0 =[1,0,0]^{\rm T}$, and exits through two symmetrically positioned outlets at the right boundary. No-slip boundary conditions are imposed on all remaining boundaries. The target volume is set to $V_0 = 0.4$. The initial phase-field function $\phi_0$ is defined as
% $\phi_0 = \min\left( \left| y - 0.32 \right| - 0.14,\ \left| y + 0.32 \right| - 0.14 \right),$
% and other relevant parameters are set: ${\rm Re=0.2}, \alpha=1.6\times 10^3, \kappa=10^{-5}, {\tau}=2\times 10^{-5}, \eta=5\times 10^2, \beta_1=1, \beta_2=0, \beta_3=1.5\times 10^3, N_p=3, N=60$. The optimized phase-field function is shown in Fig. \ref{EnergyDissi3dS1} with its velocity field in Fig. \ref{EnergyDissi3dV1}.

\textbf{Example 4}: This is a three-dimensional version of Example 1. Fluid enters the domain through two symmetric inlets located on the left boundary and exits through two symmetric outlets on the right boundary. The design domain is a rectangular box of size $1.8 \times 1 \times 0.3$ (see Fig. \ref{EnergyDissi3dS1}).

The inlet velocity is prescribed as ${\bm u}_0 = [50(0.35 - y)(0.35 + y),\ 0,\ 0]^{\rm T}$, representing a symmetric parabolic inflow profile. No-slip boundary conditions are imposed on all other boundaries. Set $V_0 = 0.45$. The initial phase-field function is set as
$\phi_0 = \min\left( \left| y - 0.35 \right| - 0.15,\ \left| y + 0.35 \right| - 0.15 \right),$ which initializes two symmetric solid bars in the design domain.
Other relevant parameters are listed: ${\rm Re=13.69}, \alpha=2\times 10^2, \kappa= 10^{-4}, {\tau}=2\times 10^{-5}, \eta=10^3, \beta_3=8\times 10^2, N=60$. The optimized design and its velocity field are shown in Fig. \ref{EnergyDissi3dS2} and Fig. \ref{EnergyDissi3dV2}, respectively. We present the convergence history of free energy in Fig. \ref{ConvergenceHistoriesOfEnergyDissipation3d}. 

\begin{figure}[htbp]
    \centering
    
    \subfigure[Design domain for 3D energy dissipation problem]{
    \begin{minipage}[t]{0.5\linewidth} 
        \centering
    \includegraphics[width=0.8\textwidth]{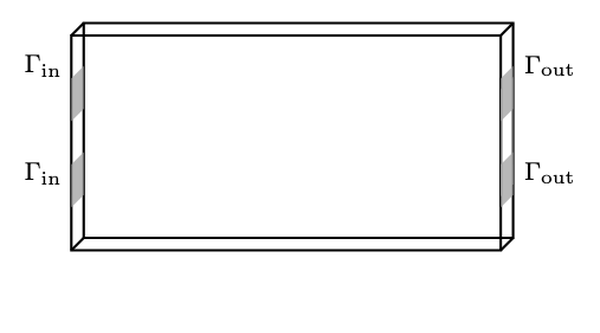}
            \label{EnergyDissi3dS1}
    \end{minipage}
}
    \hfill
    \subfigure[Convergence histories of objective]{
    \begin{minipage}[t]{0.45\linewidth}
        \centering
        \includegraphics[width=0.8\textwidth]{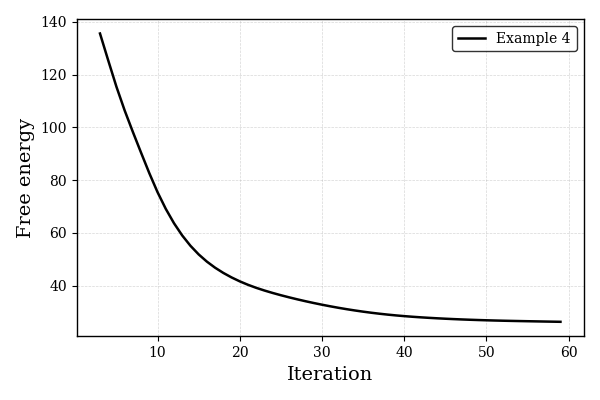}
        \label{ConvergenceHistoriesOfEnergyDissipation3d}
    \end{minipage}%
    }
    \vspace{1em}
    \subfigure[Optimized phase-field function]{
    \begin{minipage}[t]{0.45\linewidth}
        \centering
        \includegraphics[width=0.8\textwidth]{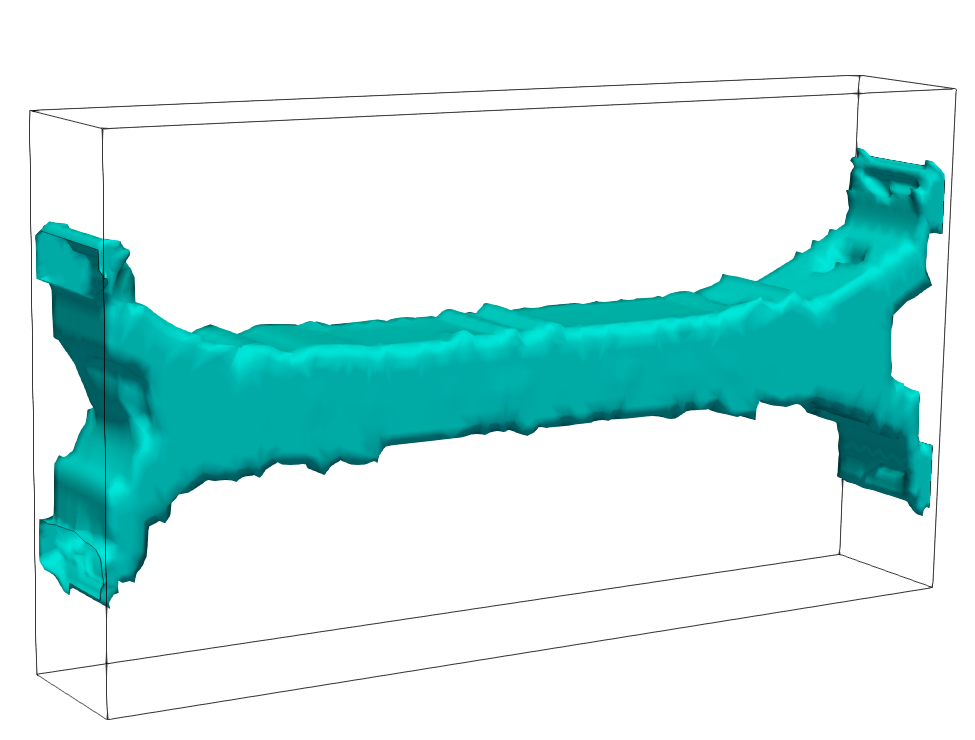}
        \label{EnergyDissi3dS2}
    \end{minipage}
    }
    \hfill
    \subfigure[Velocity field on optimized shape]{
    \begin{minipage}[t]{0.45\linewidth}
        \centering
        \includegraphics[width=0.8\textwidth]{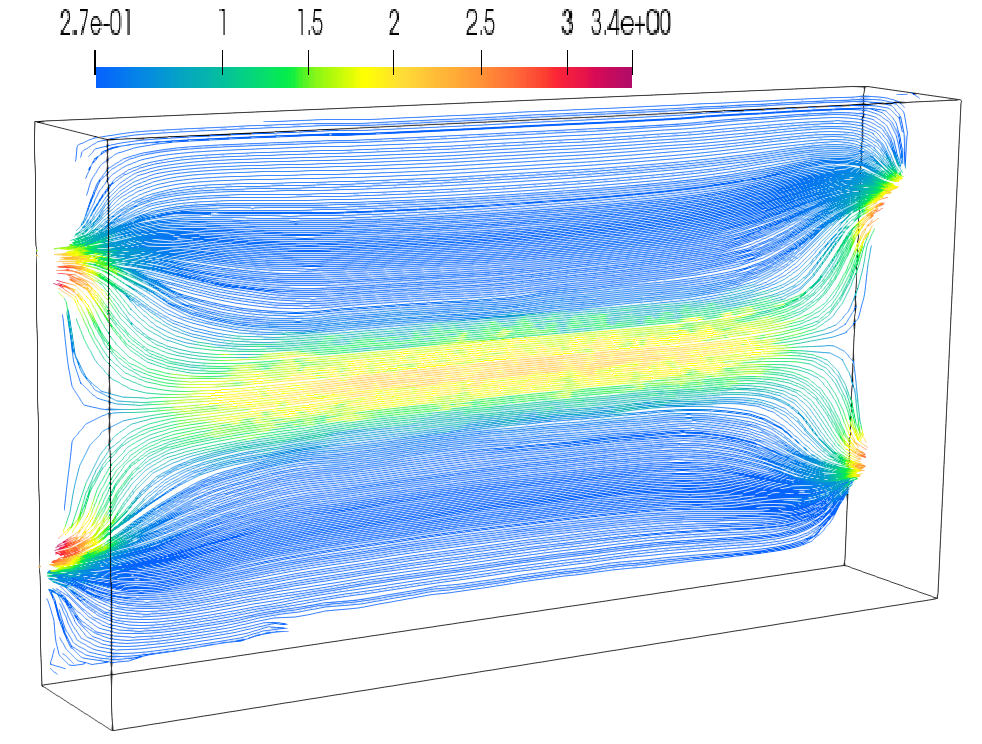}
        \label{EnergyDissi3dV2}
    \end{minipage}
    }
    \caption{Design domain, convergence history of energy, optimized phase-field, and velocity distribution for Example 4.}
    \label{DesignDomainFor3dEnergyDissi}
\end{figure}

\subsection{Microfluidic mixer problem}

We now apply the proposed scheme to the micromixer problem with multi-physics to evaluate its performance under multiphysics coupling. 
\subsubsection{2D mixer problem}

The micromixing mixing problem was first solved by with topology optimization \cite{Andreasen2009}. Two incompressible fluids labeled $c = 0$ and $c = 1$ are introduced from the upper and lower parts of the inlet boundary, respectively. By optimizing the phase-field distribution within the rectangular domain, we aim to enhance the mixing performance and achieve a uniform concentration distribution at the outlet.

To prevent inlet blockage, which is commonly encountered in micromixing problems, placing the design domain in the central part of the channel, with buffer regions fixed as fluid near both the inlet and the outlet to ensure unobstructed flow was proposed \cite{Andreasen2009}. We follow this configuration in our setup as illustrated in Fig. \ref{AndreasenDesignDomain}, and in the following three examples, a finite element mesh consisting of $200 \times 100$ triangular elements is adopted.
\begin{figure}[htbp]
    \centering
    % \subfigure[Design domain for mixer problem]{
    % \begin{minipage}[t]{0.48\linewidth}
    % \centering
    \begin{tikzpicture}[yscale=0.6,xscale=2.2]
\draw (0,0) rectangle (3,3);
\draw[dashed] (0,3)..controls(0.8,1.5)..(0,0);
\draw[<->={-Stealth[width=3pt, length=3pt]}] (-0.05,0)--(-0.05,1.5);
\draw(-0.2,0.75) node[font=\small, scale=0.7]{$c=0$};
\draw[<->={-Stealth[width=3pt, length=3pt]}] (-0.05,3)--(-0.05,1.5);
\draw(-0.2,2.25) node[font=\small, scale=0.7]{$c=1$};
\draw[arrows = {-Stealth[width=3pt, length=3pt]}](0,0.5)--(0.25,0.5);
\draw[arrows = {-Stealth[width=3pt, length=3pt]}](0,2.5)--(0.25,2.5);
\draw[arrows = {-Stealth[width=3pt, length=3pt]}](0,1)--(0.5,1);
\draw[arrows = {-Stealth[width=3pt, length=3pt]}](0,2)--(0.5,2);
\draw[arrows = {-Stealth[width=3pt, length=3pt]}](0,1.5)--(0.6,1.5);
\draw[arrows = {-Stealth[width=3pt, length=3pt]}](3,0.5)--(3.25,0.5);
\draw[arrows = {-Stealth[width=3pt, length=3pt]}](3,1)--(3.25,1);
\draw[arrows = {-Stealth[width=3pt, length=3pt]}](3,1.5)--(3.25,1.5);
\draw[arrows = {-Stealth[width=3pt, length=3pt]}](3,2)--(3.25,2);
\draw[arrows = {-Stealth[width=3pt, length=3pt]}](3,2.5)--(3.25,2.5);
\draw[<->={-Stealth[width=3pt, length=3pt]}] (2.95,0)--(2.95,3);

\fill[gray!70] (0.7,0) rectangle (2.3,3);
\node at (1.5,1.5) {\textbf{\textcolor{white}{Design domain}}};

\draw(2.7,1.5) node[font=\small, scale=0.7]{$c_d=0.5$};

\draw(1.5,0.25) node[font=\small, scale=0.7]{$\Gamma_d$};
\draw(1.5,2.75) node[font=\small, scale=0.7]{$\Gamma_u$};
	\end{tikzpicture}
     % \caption{The design domain for energy dissipation problem.}

%     \end{minipage}%
% }
    % \hfill
    % \subfigure[Initial phase-field]{
    % \begin{minipage}[t]{0.48\linewidth}
    %     \centering
    %     \includegraphics[width=0.9\textwidth]{figures/InitialPhaseFieldFor2dMixer.png}
    %     % \caption{Fluid velocity of energy dissipation }
    %     \label{InitialPhaseFieldFor2dMixer}
    % \end{minipage}
    % }
    % \caption{Initial design for mixer problem}
\caption{Design domain for mixer problem}
\label{AndreasenDesignDomain}
\end{figure}
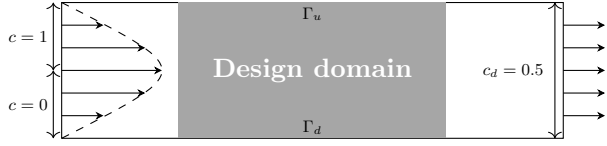
Here, we set the design domain to be about $80\%$ of the whole domain and the prescribed concentration function is
\begin{equation}\label{InitialConcentration}
\begin{aligned}
c_0(x,y) = 
\begin{cases}
    0, & \text{if } y < 0.5, \\
    1, & \text{if } y \ge 0.5.\\
\end{cases}
\end{aligned}
\end{equation} 
Set $\beta_1 = 0$, $\beta_2 = 1$, $\beta_3 = 20$, $N_p=10$, and $N=100$.

\textbf{Example 5:}
Consider a domain of size $4 \times 1$. Set the initial
\begin{equation}\label{InitialShapeFor4times1}
\begin{aligned}
\phi_0(x, y) =
\begin{cases}
    1, & \text{if } x < 0.5 \text{ or } x > 3.625, \\
    1 - \left( 0.5 \cos(4\pi x) \cos\left(2\pi y + \frac{\pi}{2} \right) + 0.15 \right), & \text{if } 0.5 \le x \le 3.625,
\end{cases}
\end{aligned}
\end{equation}
which is shown in Fig. \ref{InitialShapeMixer2d}.
Set the inflow velocity ${\bm u_0} = [0.2y(1 - y),0]^{\rm T}$, and the target volume $V_0 = \frac{15}{32}$. We apply Algorithm \ref{algPB}, in which the Newton iteration for the Poisson-Boltzmann equations is skipped, using the following parameters: ${\rm Re} = 1.0$, ${\rm Pe} = 3\times 10^2$, $\alpha_0 = 8\times 10^2$, ${\tau} = 8\times10^{-4}$, $\kappa = 10^{-3}$, and $\eta = 2$. The optimized results are shown in Fig. \ref{NSCD2dC}, Fig. \ref{NSCD2dS} and Fig. \ref{NSCD2dV}. The iteration process of the free energy is shown in Fig. \ref{NSCDfreeenergy}.

\textbf{Example 6:} 
For comparison and broader applicability, a $2.5 \times 1$ domain is adopted. %Here, %the initial phase-field function is set to be 
% \begin{equation}
% \begin{aligned}
% \phi_0(x, y) =
% \begin{cases}
%     1, & \text{if } x < 0.25 \text{ or } x > 2.375, \\
%     1 - \left( 0.5 \cos(4\pi x) \cos\left(2\pi y + \frac{\pi}{2} \right) + 0.15 \right), & \text{if } 0.25 \le x \le 2.375,
% \end{cases}
% \end{aligned}
% \end{equation}
% which is shown in Fig. \ref{InitialShapeMixer2d2}. 
Using the same parameters as the former example and setting $V_0=\frac{9}{32}$, the optimized results are presented in Fig. \ref{NSCD2d2}. 
The modified free energy is formulated as \eqref{W2}.

The iteration process of the free energy is shown in Fig. \ref{NSCDFreeEnergy2}.
\begin{figure}[htbp]
\centering
\subfigure[Initial phase-field distribution for Example 5]{
    \begin{minipage}[t]{0.45\linewidth}
        \centering
        \includegraphics[width=0.93\textwidth]{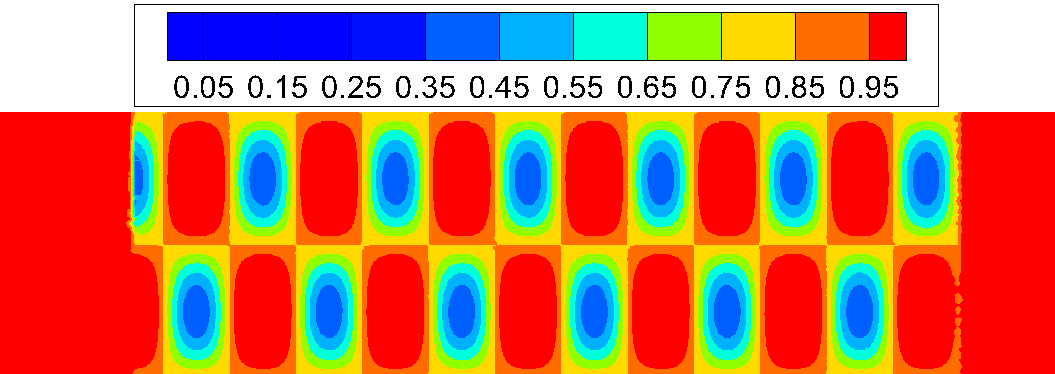}
        \label{InitialShapeMixer2d}
    \end{minipage}%
    }
    \hfill
    \subfigure[Optimized concentration distribution]{
    \begin{minipage}[t]{0.45\linewidth}
        \centering
        \includegraphics[width=0.93\textwidth]{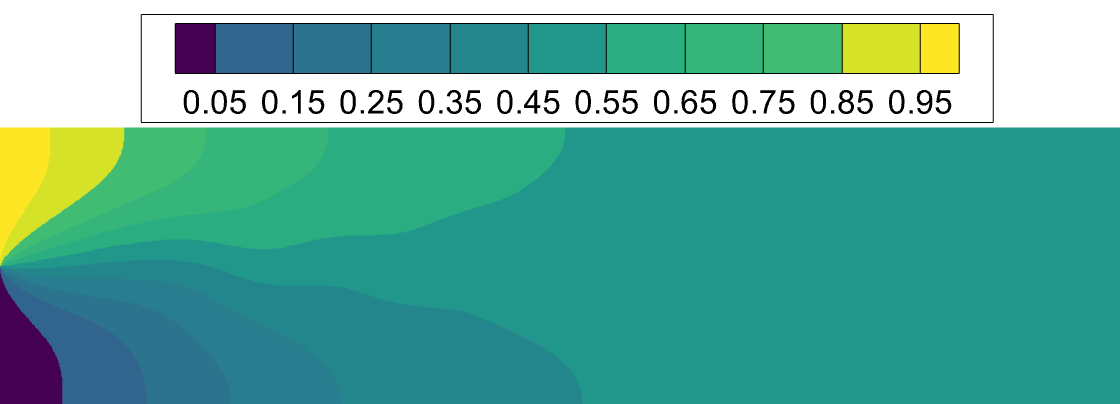}
        \label{NSCD2dC}
    \end{minipage}
    }
    \vspace{1em}
    \subfigure[Optimized phase-field distribution]{
    \begin{minipage}[t]{0.45\linewidth}
        \centering
        \includegraphics[width=0.93\textwidth]{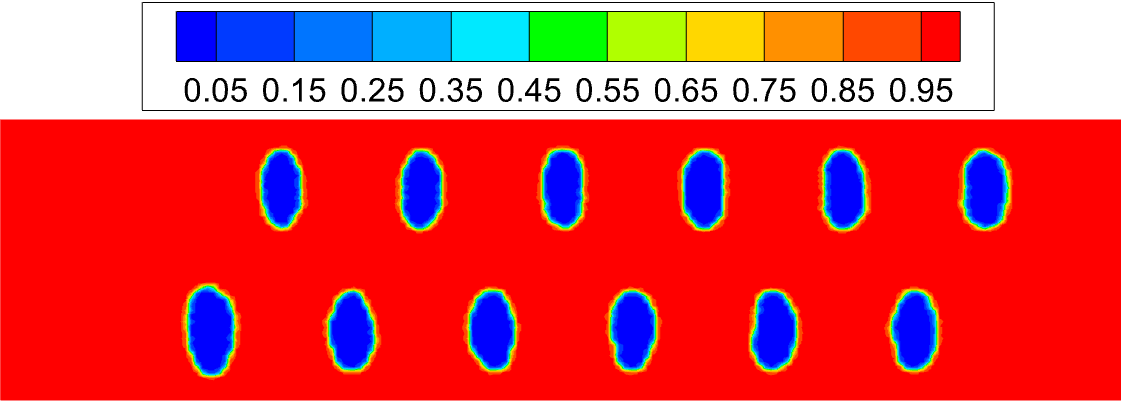}
        % \caption{Fluid velocity of energy dissipation }
        \label{NSCD2dS}
    \end{minipage}
    }
    \hfill
    \subfigure[Optimized velocity field]{
    \begin{minipage}[t]{0.45\linewidth}
        \centering
        \includegraphics[width=0.93\textwidth]{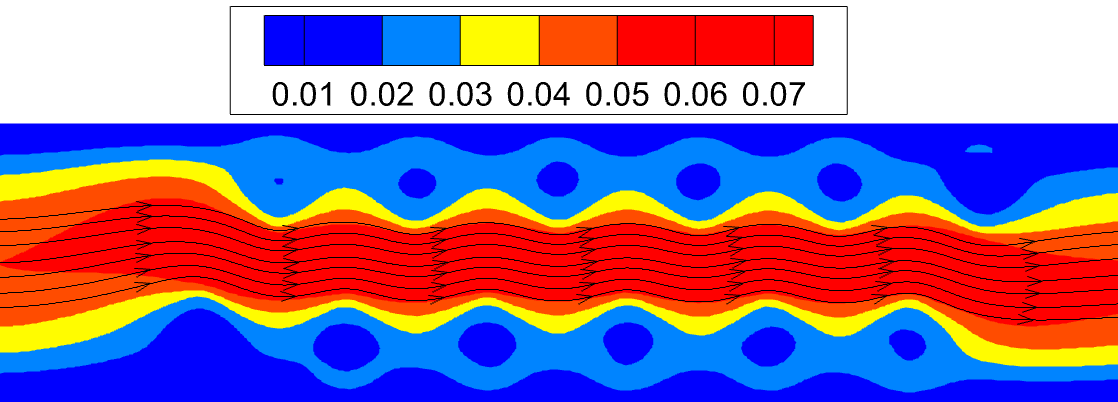}
        % \caption{Fluid velocity of energy dissipation }
        \label{NSCD2dV}
    \end{minipage}
    }
    \caption{Initial phase-field distribution and optimized results for Example 5.}
    \label{NSCD2d}
\end{figure}

\begin{figure}[htbp]
\centering
\subfigure[Initial phase-field distribution]{
    \begin{minipage}[t]{0.45\linewidth}
        \centering
        \includegraphics[width=0.8\textwidth]{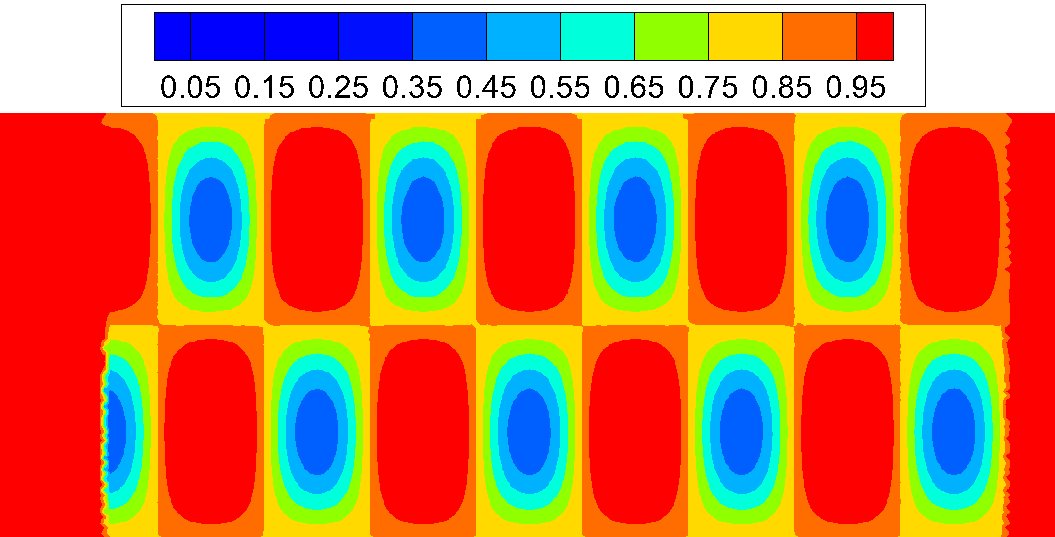}
        \label{InitialShapeMixer2d2}
    \end{minipage}%
    }
    \hfill
    \subfigure[Optimized concentration distribution]{
    \begin{minipage}[t]{0.45\linewidth}
        \centering
        \includegraphics[width=0.8\textwidth]{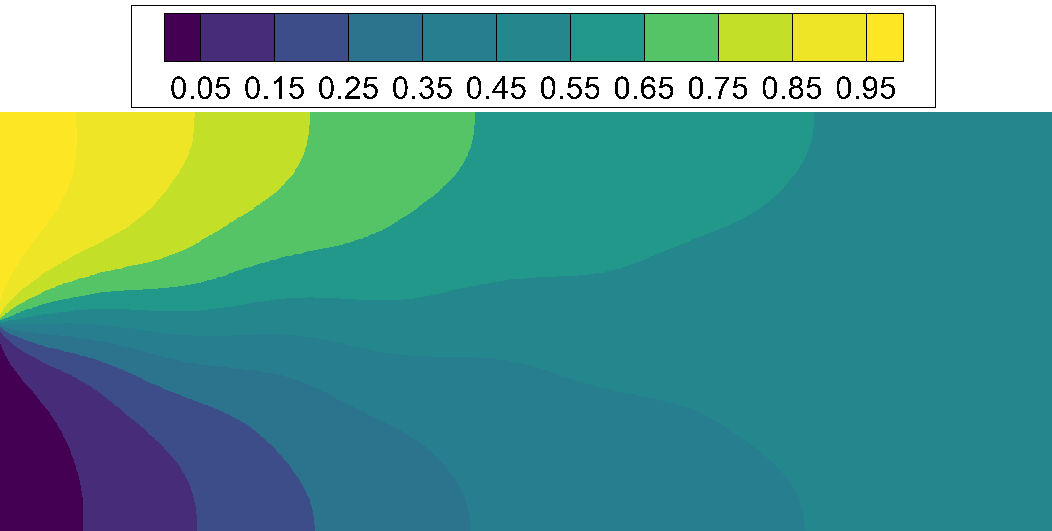}
        \label{NSCD2dC2}
    \end{minipage}
    }
    \vspace{1em}
    \subfigure[Optimized phase-field distribution]{
    \begin{minipage}[t]{0.45\linewidth}
        \centering
        \includegraphics[width=0.8\textwidth]{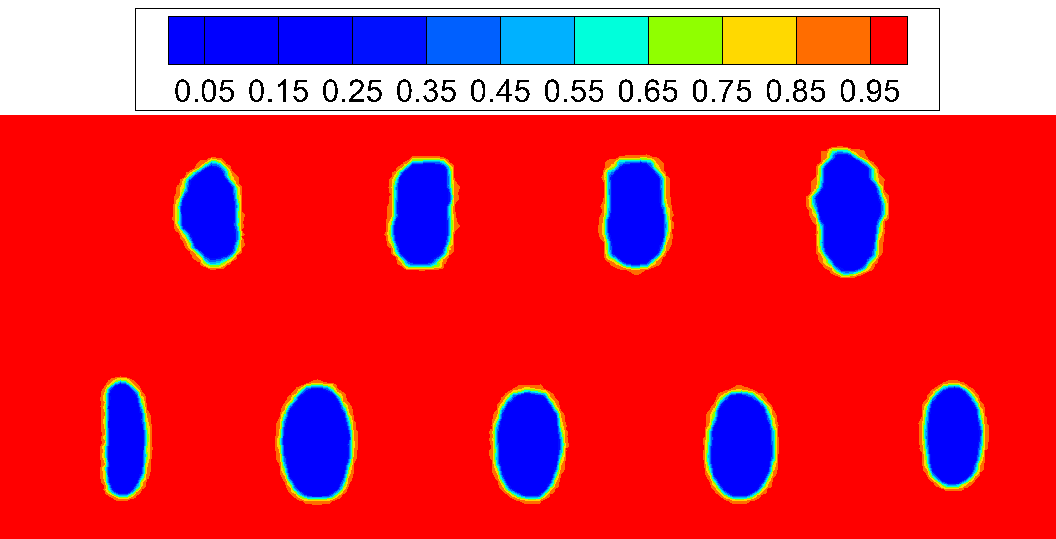}
        % \caption{Fluid velocity of energy dissipation }
        \label{NSCD2dS2}
    \end{minipage}
    }
    \hfill
    \subfigure[Optimized velocity field]{
    \begin{minipage}[t]{0.45\linewidth}
        \centering
        \includegraphics[width=0.8\textwidth]{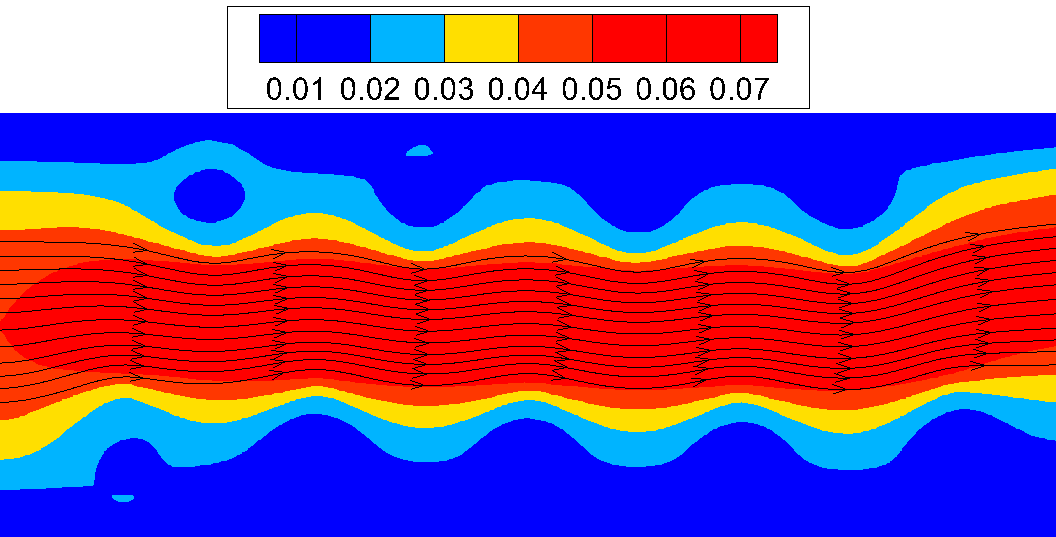}
        % \caption{Fluid velocity of energy dissipation }
        \label{NSCD2dV2}
    \end{minipage}
    }
    \caption{Initial phase-field distribution and optimized results for Example 6.}
    \label{NSCD2d2}
\end{figure}

%\FloatBarrier
As shown in Fig. \ref{NSCD2d} and Fig. \ref{NSCD2d2}, both configurations lead to the conclusion that efficient mixing requires internal solid structures to continuously redirect the flow, effectively confining it to the central region of the channel. Moreover, the optimized result for the longer channel appears to be an extension of the shorter one, further demonstrating the robustness of the algorithm and its potential scalability to larger domains. The decay for the mixing objective $J_2$ is shown in Fig. \ref{NSCDFreeEnergyIteration} (right). %$In both cases, the objective function decreases continuously and converges smoothly , which demonstrates the computational efficiency of the algorithm.

\begin{figure}[htbp]
\centering
\subfigure{
    \begin{minipage}[htbp]{0.45\linewidth}
        \centering
        \includegraphics[width=0.8\textwidth]{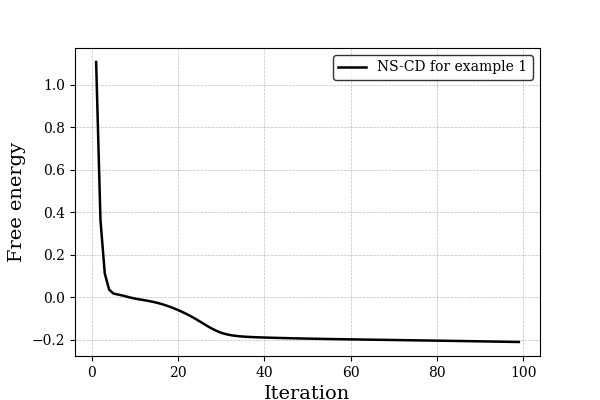}
        %\caption{Objective for Example 1 }
        \label{NSCDfreeenergy}
    \end{minipage}%
    }
    \hfill
\subfigure{
    \begin{minipage}[htbp]{0.45\linewidth}
        \centering
        \includegraphics[width=0.8\textwidth]{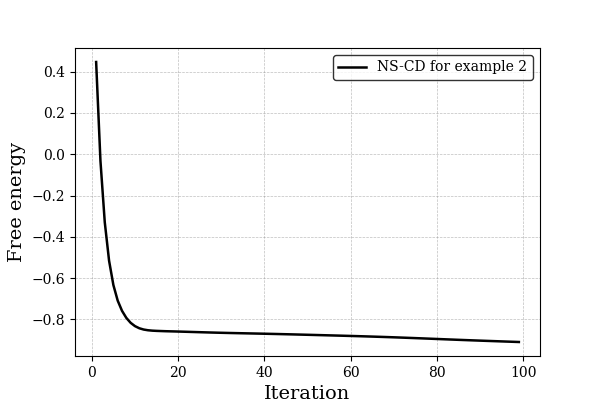}
        %\caption{Objective for Example 2 }
        \label{NSCDFreeEnergy2}
    \end{minipage}
    }
    \caption{Convergence histories of objective for mixer problem: Example 5 (left) and Example 6 (right).}
    \label{NSCDFreeEnergyIteration}
\end{figure}

\textbf{Example 7:} To further evaluate the performance of our method in a complex multiphysics-coupled system, we consider an electrokinetically enhanced micromixing problem. The design domain is adopted from \cite{Ji2017}, as illustrated in Fig. \ref{DesignDomainForPBMixerProblem}. In this case, a periodic electric field is applied on both sides of the design domain, acting as an external force to enhance internal mixing and energy focusing. The solid region is treated as an insulating material. To model the electrostatic interaction, we couple the system with the Poisson–Boltzmann equation, allowing the electric field to guide the flow distribution while maintaining smooth inflow conditions. Here, a new boundary setting is adopted for the mixer problem in rectangle $\Omega=[0,4]\times[0,1]$. In this case, we set the electrical potential on the boundary to be
\begin{equation*}
\begin{cases}
\psi(x, y)=0.1y(1-y)^2\sin(\pi y), 
  & \text{on }\Gamma_i,\\
\psi(x, y) =0, 
  & \text{on } \Gamma_u\cup\Gamma_d,\\
\psi(x, y) =0.7(x-0.5)(x-1)(x-2)(x-3)\Bigl(x-\tfrac{29}{8}\Bigr)\sin{\pi x}, 
  & \text{on }\Gamma_u,\\
\psi(x, y) =0, 
  & \text{on }\Gamma_d,\\
  \nabla\psi\cdot\bm n = 0, 
  & \text{on }\Gamma_o.
\end{cases}
\end{equation*}
We use the same concentration function and the initial phase-field function as presented in \eqref{InitialConcentration} and \eqref{InitialShapeFor4times1}. The inflow velocity is set to be ${\bm u_0} = [0.2y(1 - y),0]^{\rm T}$, and the target volume is $V_0 = \frac{15}{32}$. We apply the Algorithm \ref{algPB}, in which the Newton iteration for the Poisson-Boltzmann equation is adopted, using the following parameters: $\varepsilon_0=3.6$, $\rho_0=1.8$, ${\rm Re} = 1.0$, ${\rm Pe} = 3\times 10^2$, $\alpha_0 = 8\times 10^2$, ${\tau} = 8\times 10^{-4}$, $\kappa = 10^{-3}$, and $\eta = 2$. The optimized results are shown in Fig. \ref{ResultsOfTheMixerProblemUnderElectricity}. The modified free energy applied is
$$\mathcal{W}_2 =\int_{\Omega} \frac{\kappa}{2} |\nabla \phi|^2 + \frac{1}{4} \phi^2 (1 - \phi)^2 + \eta \, \phi^3 (6\phi^2 - 15\phi + 10) \frac{J_2'}{\|J_2'\|}\dx+\frac{\beta_3}{2}(V(\phi)-V_0)^2.$$
And the iteration process of the free energy is shown in Fig. \ref{ConvergenceHistoriesOfMixer3d} (left).

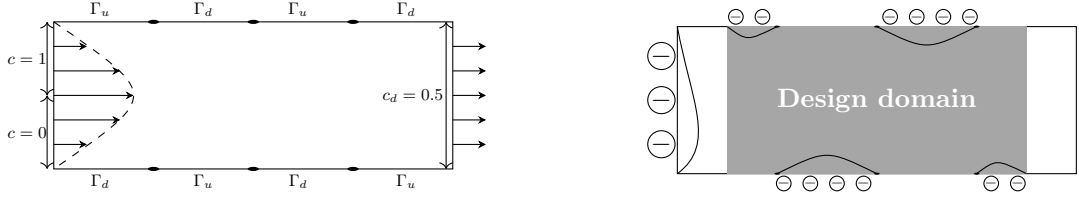
\begin{figure}[htbp]
\centering
\subfigure{
    \begin{minipage}[t]{0.45\linewidth}
    \centering
    \begin{tikzpicture}[yscale=0.65,xscale=1.75]
\draw (0,0) rectangle (3,3);
\draw[dashed] (0,3)..controls(0.8,1.5)..(0,0);
\draw[<->={-Stealth[width=3pt, length=3pt]}] (-0.05,0)--(-0.05,1.5);
\draw(-0.2,0.75) node[font=\small, scale=0.7]{$c=0$};
\draw[<->={-Stealth[width=3pt, length=3pt]}] (-0.05,3)--(-0.05,1.5);
\draw(-0.2,2.25) node[font=\small, scale=0.7]{$c=1$};
\draw[arrows = {-Stealth[width=3pt, length=3pt]}](0,0.5)--(0.25,0.5);
\draw[arrows = {-Stealth[width=3pt, length=3pt]}](0,2.5)--(0.25,2.5);
\draw[arrows = {-Stealth[width=3pt, length=3pt]}](0,1)--(0.5,1);
\draw[arrows = {-Stealth[width=3pt, length=3pt]}](0,2)--(0.5,2);
\draw[arrows = {-Stealth[width=3pt, length=3pt]}](0,1.5)--(0.6,1.5);
\draw[arrows = {-Stealth[width=3pt, length=3pt]}](3,0.5)--(3.25,0.5);
\draw[arrows = {-Stealth[width=3pt, length=3pt]}](3,1)--(3.25,1);
\draw[arrows = {-Stealth[width=3pt, length=3pt]}](3,1.5)--(3.25,1.5);
\draw[arrows = {-Stealth[width=3pt, length=3pt]}](3,2)--(3.25,2);
\draw[arrows = {-Stealth[width=3pt, length=3pt]}](3,2.5)--(3.25,2.5);
\draw[<->={-Stealth[width=3pt, length=3pt]}] (2.95,0)--(2.95,3);
\draw(2.7,1.5) node[font=\small, scale=0.7]{$c_d=0.5$};
% \draw[<->={-Stealth[width=3pt, length=3pt]}] (0,3.1)--(0.75,3.1);
\draw(0.35,3.25) node[font=\small, scale=0.7]{$\Gamma_u$};
% \draw[<->={-Stealth[width=3pt, length=3pt]}] (0.75,3.1)--(1.5,3.1);
\draw(1.125,3.25) node[font=\small, scale=0.7]{$\Gamma_d$};
\draw(1.8375,3.25) node[font=\small, scale=0.7]{$\Gamma_u$};
\draw(2.65,3.25) node[font=\small, scale=0.7]{$\Gamma_d$};
\fill (0.75,3) circle (1.2pt);
\fill (1.5,3) circle (1.2pt);
\fill (2.25,3) circle (1.2pt);
\fill (0.75,0) circle (1.2pt);
\fill (1.5,0) circle (1.2pt);
\fill (2.25,0) circle (1.2pt);
% \draw(1.5,0.25) node[font=\small, scale=0.7]{$\Gamma_d$};
% \draw(1.5,2.75) node[font=\small, scale=0.7]{$\Gamma_u$};
\draw(0.35,-0.25) node[font=\small, scale=0.7]{$\Gamma_d$};
% \draw[<->={-Stealth[width=3pt, length=3pt]}] (0.75,3.1)--(1.5,3.1);
\draw(1.125,-0.25) node[font=\small, scale=0.7]{$\Gamma_u$};
\draw(1.8375,-0.25) node[font=\small, scale=0.7]{$\Gamma_d$};
\draw(2.65,-0.25) node[font=\small, scale=0.7]{$\Gamma_u$};
	\end{tikzpicture}
     % \caption{The design domain for energy dissipation problem.}
     \label{DesignDomainForPBMixerProblem}

    \end{minipage}%
}
    \hfill
    \subfigure{
    \begin{minipage}[t]{0.45\linewidth}
    \centering
    \begin{tikzpicture}[yscale=0.65,xscale=1.75]
\draw (0,0) rectangle (3,3);
\fill[gray!70] (0.375,0) rectangle (2.625,3);
\node at (1.5,1.5) {\textbf{\textcolor{white}{Design domain}}};
\draw (0,3) .. controls (0.1,1.5) and (0.3,0.6) .. (0,0);
\draw (0.375,3)..controls(0.525,2.7)..(0.75,3);
\draw (0.75,0)..controls(1.125,0.5)..(1.5,0);
\draw (1.5,3)..controls(1.875,2.5)..(2.25,3);
\draw (2.25,0)..controls(2.4,0.3)..(2.625,0);

\fill (0.75,3) circle (0.6pt);
\fill (1.5,3) circle (0.6pt);
\fill (2.25,3) circle (0.6pt);
\fill (0.75,0) circle (0.6pt);
\fill (1.5,0) circle (0.6pt);
\fill (2.25,0) circle (0.6pt);
\tikzset{
  charge/.style={
    circle,
    draw,
    fill=white,
    inner sep=0pt,
    radius=0.1
  }
}
\foreach \y in {2.4,1.5,0.6} {
    \node[charge] at (-0.12,\y) {$-$};
  }
\draw (0.44,3.2) ellipse (0.055 and 0.15);
\node[scale=0.6] at (0.44,3.2) {$-$};
\draw (0.64,3.2) ellipse (0.055 and 0.15);
\node[scale=0.6] at (0.64,3.2) {$-$};

\draw (2.56,-0.2) ellipse (0.055 and 0.15);
\node[scale=0.6] at (2.56,-0.2) {$-$};
\draw (2.36,-0.2) ellipse (0.055 and 0.15);
\node[scale=0.6] at (2.36,-0.2) {$-$};

\draw (1.6,3.2) ellipse (0.055 and 0.15);
\node[scale=0.6] at (1.6,3.2) {$-$};
\draw (1.8,3.2) ellipse (0.055 and 0.15);
\node[scale=0.6] at (1.8,3.2) {$-$};
\draw (2.0,3.2) ellipse (0.055 and 0.15);
\node[scale=0.6] at (2.0,3.2) {$-$};
\draw (2.2,3.2) ellipse (0.055 and 0.15);
\node[scale=0.6] at (2.2,3.2) {$-$};

\draw (1.4,-0.2) ellipse (0.055 and 0.15);
\node[scale=0.6] at (1.4,-0.2) {$-$};
\draw (1.2,-0.2) ellipse (0.055 and 0.15);
\node[scale=0.6] at (1.2,-0.2) {$-$};
\draw (1.0,-0.2) ellipse (0.055 and 0.15);
\node[scale=0.6] at (1.0,-0.2) {$-$};
\draw (0.8,-0.2) ellipse (0.055 and 0.15);
\node[scale=0.6] at (0.8,-0.2) {$-$};

	\end{tikzpicture}
     % \caption{The design domain for energy dissipation problem.}
     \label{ElectricityFieldDistribution}

    \end{minipage}%
}
\caption{Design domain (left) and electric field setup (right) for electrokinetic micromixing.}
\label{InitialDesignForTheMixerProblemUnderElectricity}
\end{figure}
\FloatBarrier

\begin{figure}[htbp]
    \centering
    \subfigure{
    \begin{minipage}[t]{0.31\linewidth}
    \centering
    \includegraphics[width=1.15\linewidth]{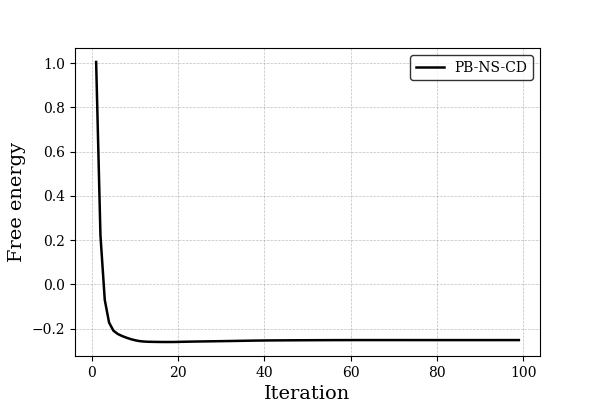}
    \end{minipage}
    }
    \hfill
    \subfigure{
    \begin{minipage}[t]{0.31\linewidth}
    \centering
    \includegraphics[width=1.05\linewidth]{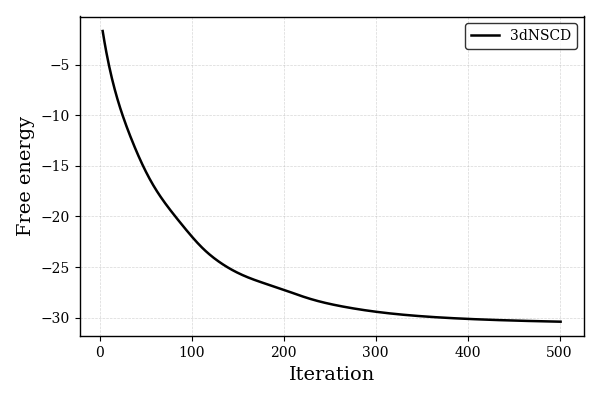}
    \end{minipage}
    }
    \hfill
    \subfigure{
    \begin{minipage}[t]{0.31\linewidth}
    \includegraphics[width=1.05\linewidth]{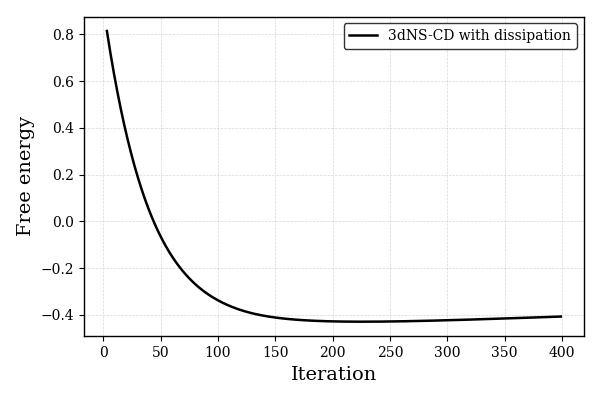}
    \end{minipage}
    }
    \caption{Energy for 2D (left), 3D mixer problem (middle and right) with modified objective.}
    \label{ConvergenceHistoriesOfMixer3d}
\end{figure}

\FloatBarrier
\begin{figure}[htbp]
\centering
    \subfigure[Electricity field for the optimized design]{
    \begin{minipage}[t]{0.45\linewidth}
        \centering
        \includegraphics[width=0.9\textwidth]{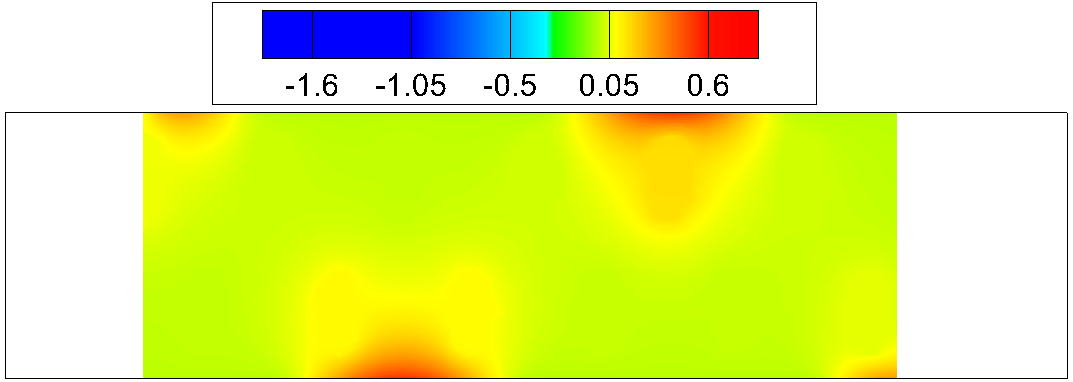}
        \label{PBNSCD2dPsi}
    \end{minipage}%
    }
    \hfill
    \subfigure[Concentration distribution under electricity field]{
    \begin{minipage}[t]{0.45\linewidth}
        \centering
        \includegraphics[width=0.9\textwidth]{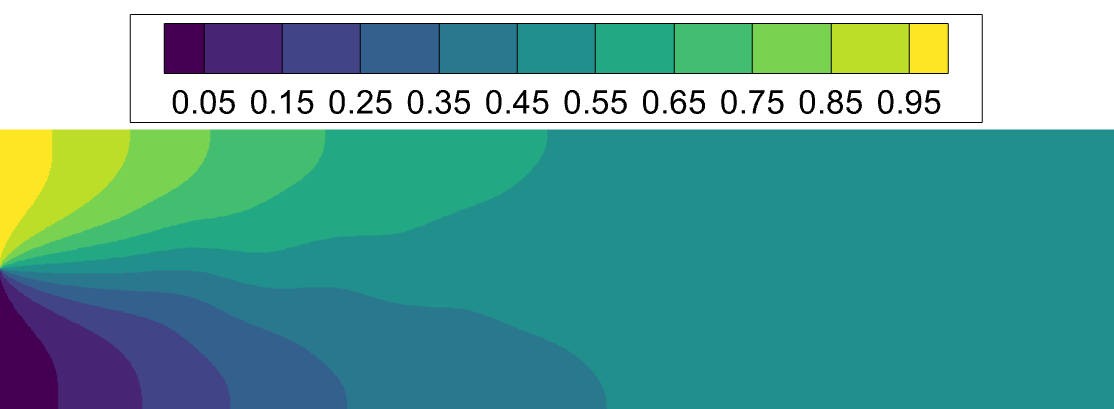}
        \label{PBNSCD2dC}
    \end{minipage}%
    }
  \vspace{1em}
    \subfigure[Optimized design under electricity field]{
    \begin{minipage}[t]{0.45\linewidth}
        \centering
        \includegraphics[width=0.9\textwidth]{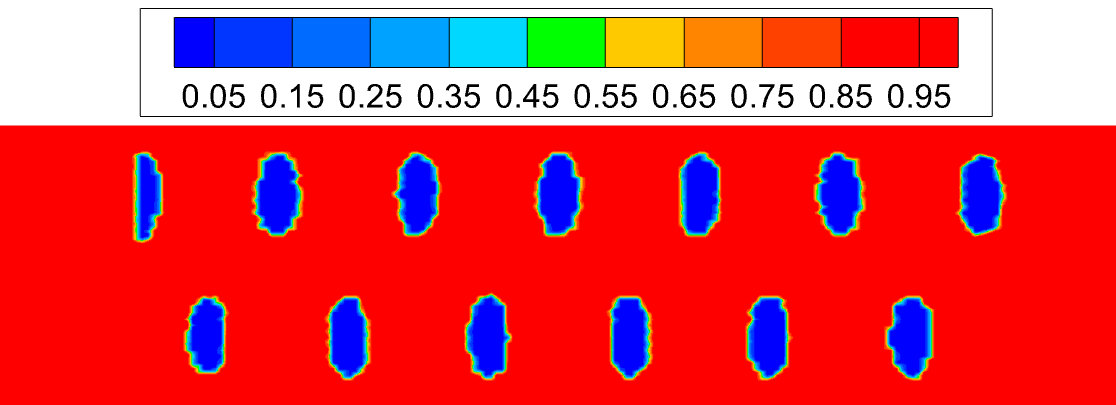}
        \label{PBNSCD2dS}
    \end{minipage}%
    }
    \hfill
    \subfigure[Fluid velocity field under electricity field]{
    \begin{minipage}[t]{0.45\linewidth}
        \centering
        \includegraphics[width=0.9\textwidth]{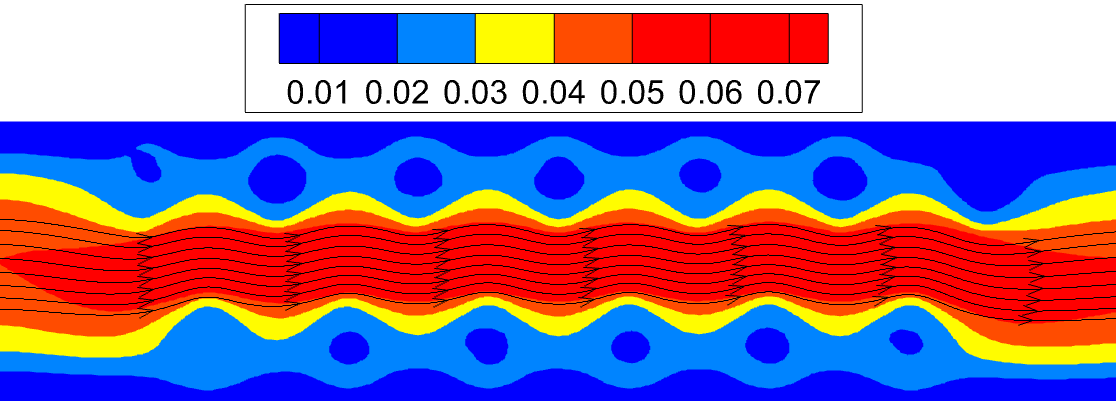}
        \label{PBNSCD2dV}
    \end{minipage}%
    }
\caption{Results on physical fields of the mixer problem under electricity.}
\label{ResultsOfTheMixerProblemUnderElectricity}
\end{figure}

As shown in the Fig. \ref{ResultsOfTheMixerProblemUnderElectricity}, the optimized structure with the electric field is similar to the case without it, where the fluid is still redirected by the solid region. However, by comparing Fig. \ref{NSCD2dV} and Fig. \ref{PBNSCD2dV}, it is evident that the applied electric field helps to adjust the flow velocity, thereby preventing sudden impacts between the fluid and the solid region near the inlet. In this electrokinetically enhanced case, the mixing objective decreases from $3.3 \times 10^{-5}$ to $9.38 \times 10^{-6}$ after $100$ iterations. The corresponding free energy curve also shows the efficiency of the proposed method in handling more complex multiphysics-coupled scenarios.

\subsubsection{3D mixer problem}
Building upon the insights gained from the two-dimensional mixer problem, we now extend the analysis to a three-dimensional setting (see Fig. \ref{DesignDomainFor3dMixerProblem}). This transition not only allows us to assess the scalability and robustness of the proposed optimization framework, but also enables a more realistic investigation of flow behavior in a practical microfluidic system.

\textbf{Example 8 :} For the 3D mixer problem, the same fluid properties are used with ${\rm Re}=1, {\rm Pe}=3\times 10^2, \alpha_0=8 \times 10^2$. We set the inflow velocity to be ${\bm u_0} = [0.1y(1 - y),0,0]^{\rm T}$ and the initial phase-field function 
\begin{equation}
\begin{aligned}
\phi_0(x, y,z) =
\begin{cases}
    1, & \text{if } x < 0.5 \text{ or } x > 3.5, \\
    1 - \left( 0.5 \cos(4\pi x) \cos\left(2\pi y + \frac{\pi}{2} \right) + 0.15 \right), & \text{if } 0.5 \le x \le 3.5,
\end{cases}
\end{aligned}
\end{equation}
and the prescribed concentration distribution as \eqref{InitialConcentration}.

\begin{figure}[htbp]
    \centering
    \subfigure{
    \begin{minipage}[t]{0.45\linewidth}
    \centering

    \includegraphics[width=0.8\textwidth]{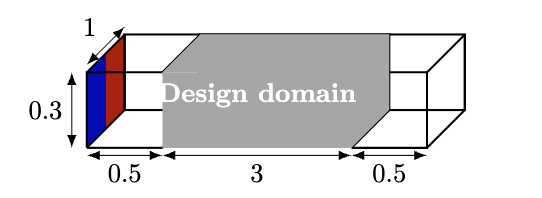}
    \label{DesignDomainFor3dMixerProblem}
    \end{minipage}%
    }
    \hfill
    \subfigure{
    \begin{minipage}[t]{0.45\linewidth}
        \centering
        \includegraphics[width=0.9\textwidth]{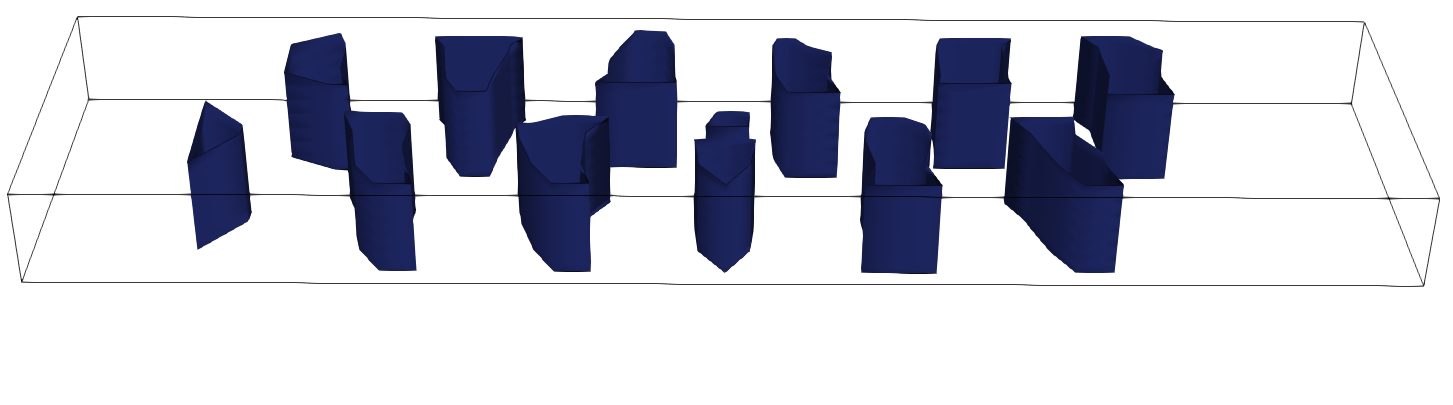}
        % \caption{Optimization shape of energy dissipation}
        %\caption{Initial guess (right) for Example 4, 5}
        \label{InitialShapeMixer3d}
    \end{minipage}%
   } 
\caption{Design domain (left) and initial guess (right) for Example 8 and Example 9.}
\end{figure}
However, due to the significantly increased degrees of freedom in 3D (about $1.2\times 10^5$ in our computation with tetrahedral elements), the phase-field iteration parameters are appropriately adjusted as: ${\tau}=5\times 10^{-6}\ $, $\kappa=0.2\ $, $\eta=10.0\ $, $\beta_1=0$, $\beta_2=1$, $\beta_3=6\times 10^2\ $, $N_p=6$ and $N=5\times 10^2$. The target volume ${ V_0}=0.135$. The optimization process of the phase field distributions is shown in Fig. \ref{PhaseFieldDuringProcess}. The initial voids gradually disappear as the iteration proceeds. The final concentration distribution and velocity slices are shown in Fig. \ref{NSCD3dC} and~\ref{NSCD3dV}, respectively. The evolution of the free energy throughout the optimization is plotted in Fig. \ref{ConvergenceHistoriesOfMixer3d}. As shown in the results, the optimized three-dimensional structure resembles a vertical stacking of the two-dimensional design, reflecting consistent flow behavior across slices. %The mixing objective $J_2$ decreases from $1.2\times 10^{-4}$ to $4.42 \times 10^{-6}$, nearly two orders of magnitude, 
Both the free energy and volume error exhibit smooth and monotonic decay, with the final volume error remaining within $3\%$ of the target, as shown in Fig. \ref{ConvergenceHistoriesOfMixer3d} (middle).

\begin{figure}[htbp]
    \centering
    \subfigure[Iteration for 200 steps]{
    \begin{minipage}[t]{0.45\linewidth}
        \centering
        \includegraphics[width=0.9\textwidth]{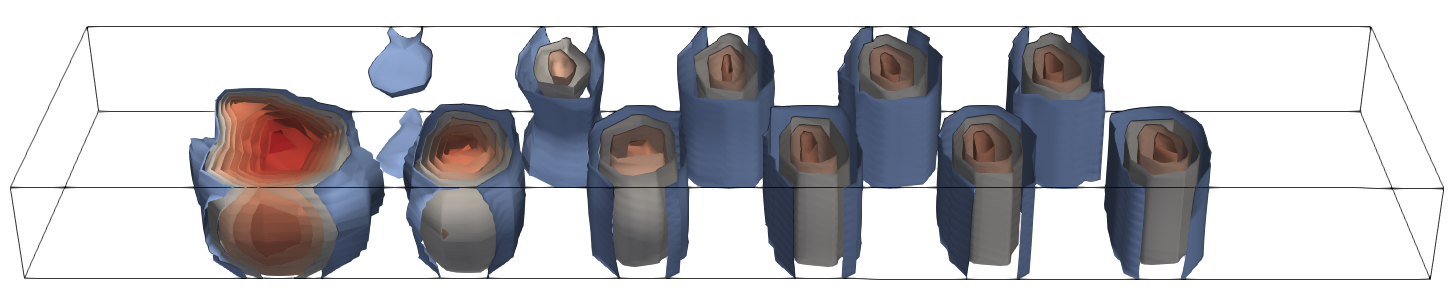}
        % \caption{Fluid velocity of energy dissipation }
        \label{NSCD3dS(200)}
    \end{minipage}
    }
    % \hfill
    % \subfigure[Iteration for 300 steps]{
    % \begin{minipage}[t]{0.45\linewidth}
    %     \centering
    %     \includegraphics[width=0.9\textwidth]{figures/NSCD3dS(300).png}
    %     % \caption{Fluid velocity of energy dissipation }
    %     \label{NSCD3dS(300)}
    % \end{minipage}
    % }
    % \vspace{1em}
    % \subfigure[Iteration for 400 steps]{
    % \begin{minipage}[t]{0.45\linewidth}
    %     \centering
    %     \includegraphics[width=0.9\textwidth]{figures/NSCD3dS(400).png}
    %     \label{NSCD3dS(400)}
    % \end{minipage}
    % }
    \hfill
    \subfigure[The final optimized phase-field distribution]{
    \begin{minipage}[t]{0.45\linewidth}
        \centering
        \includegraphics[width=0.9\textwidth]{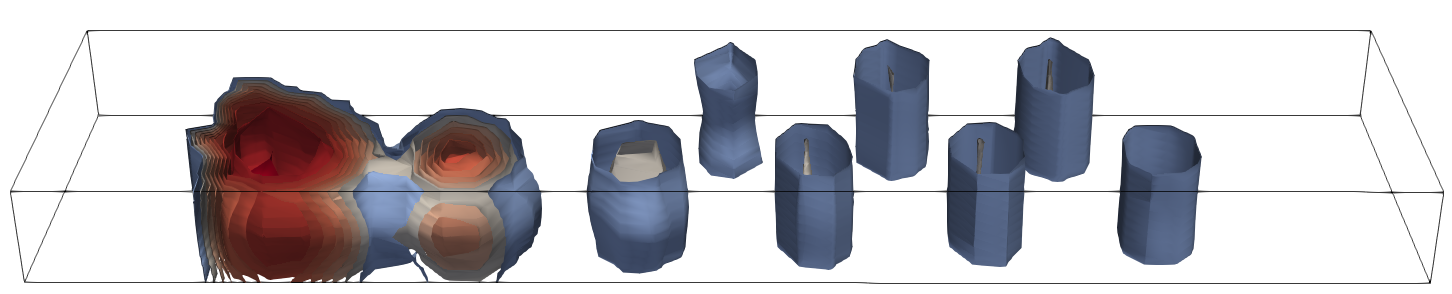}
        % \caption{Fluid velocity of energy dissipation }
        \label{NSCD3dS}
    \end{minipage}
    }
    \hfill
        \subfigure[Concentration distribution on final design]{
    \begin{minipage}[t]{0.45\linewidth}
        \centering
        \includegraphics[width=0.9\textwidth]{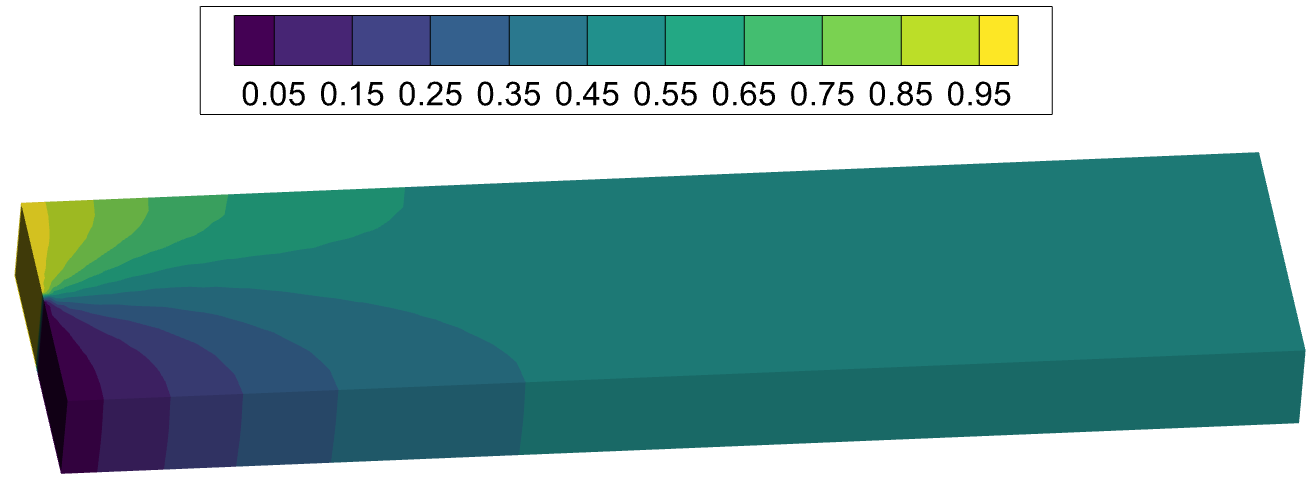}
        % \caption{Optimization shape of energy dissipation}
        \label{NSCD3dC}
    \end{minipage}%
    }
    \hfill
    \subfigure[Velocity field on final design]{
    \begin{minipage}[t]{0.45\linewidth}
        \centering
        \includegraphics[width=0.9\textwidth]{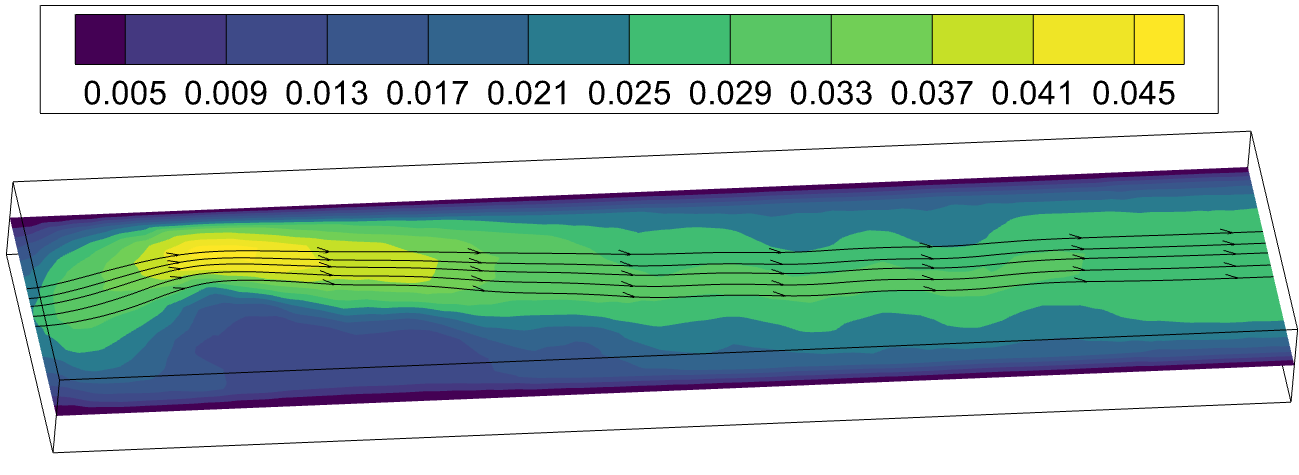}
        % \caption{Fluid velocity of energy dissipation }
        \label{NSCD3dV}
    \end{minipage}
    }
    \caption{Design process for the 3D mixer problem: Example 8.}
    \label{PhaseFieldDuringProcess}
\end{figure}

% \begin{figure}[htbp]
% \centering
%     \subfigure{
%     \begin{minipage}[t]{0.45\linewidth}
%         \centering
%         \includegraphics[width=0.9\textwidth]{figures/NSCD3dC.png}
%         % \caption{Optimization shape of energy dissipation}
%         \label{NSCD3dC}
%     \end{minipage}%
%     }
%     \hfill
%     \subfigure{
%     \begin{minipage}[t]{0.45\linewidth}
%         \centering
%         \includegraphics[width=0.9\textwidth]{figures/NSCD3dV.png}
%         % \caption{Fluid velocity of energy dissipation }
%         \label{NSCD3dV}
%     \end{minipage}
%     }
%     \label{NSCD3d}
% \caption{Concentration distribution (left) and velocity field (right) on the optimized design for the 3D mixer problem: Example 8.}
% \end{figure}
%\FloatBarrier

However, the final phase-field distribution reveals the accumulation of solid material near the inlet, leading to partial blockage, a phenomenon commonly observed in microchannel mixing problems. To address this, previous works (e.g. \cite{Alexandersen2020_ReviewFluidTopology} ) have proposed strategies such as spatial filtering and additional pressure drop constraints. In this study, we introduce a new approach aimed at mitigating inlet blockage within the topology optimization framework.

\FloatBarrier
\textbf{Example 9 :}
The occurrence of inlet blockage in micromixing problems can be interpreted as a consequence of the system’s attempt to enhance mixing by concentrating solutes into a confined region, thereby promoting passive diffusion. In previous studies, this issue has been addressed by adding pressure drop constraints to the objective, which essentially encourages smoother tangential flow along the channel. Inspired by this idea, we propose an alternative multi-objective formulation: in addition to the original mixing objective, we introduce a small dissipation-related term. Since blockage near the inlet results in high local resistance, this dissipation term promotes smoother flow throughout the domain and effectively reduces the likelihood of blockage. We adopt the same initial phase-field function and velocity as in \textbf{Example 8}. However, in this case, we perform a bi-objective optimization by adding a small dissipation term $\beta_1 J_1$ to the original objective function and set $N=4\times 10^2$.

The free energy functional applied in the modified Allen-Cahn equation is defined as
\eqref{W3}, where $\beta_1 = 10^{-2}$ and $\beta_2=1$.
The optimization process is shown in Fig. \ref{PhaseFieldWithDissiDuringProcess}. The evolution of free energy throughout the iteration is plotted in Fig. \ref{ConvergenceHistoriesOfMixer3d} (right), where the inclusion of the dissipation term significantly alleviates the inlet blockage. The optimized structure in Fig. \ref{NS3dWithDissiS} indicates that the fluid is continuously redirected by the solid regions and primarily exits through the central part of the channel, which is a natural extension behavior consistent with the two-dimensional case. 
 
\begin{figure}[htbp]
    \centering
    \subfigure[Design after 200 steps]{
    \begin{minipage}[t]{0.45\linewidth}
        \centering
        \includegraphics[width=0.9\textwidth]{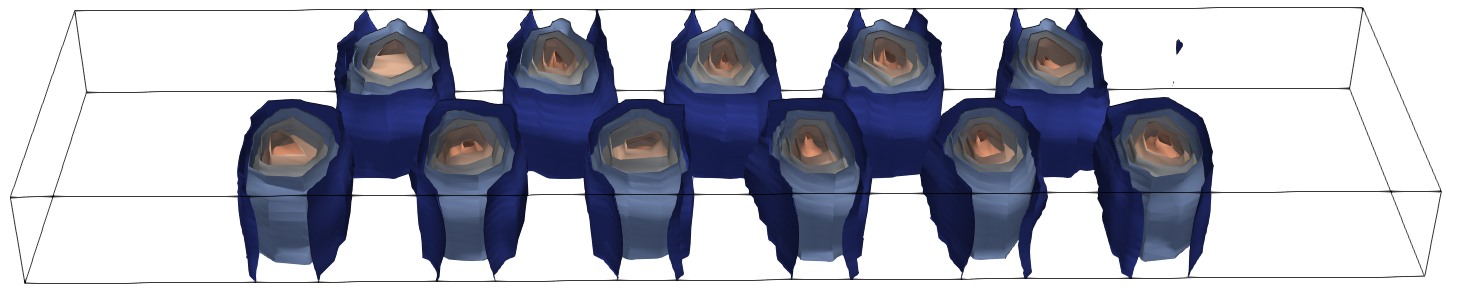}
        % \caption{Fluid velocity of energy dissipation }
        \label{NS3dWithDissiS(200)}
    \end{minipage}
    }
    \hfill
    % \subfigure[Iteration for 300 steps]{
    % \begin{minipage}[t]{0.45\linewidth}
    %     \centering
    %     \includegraphics[width=0.9\textwidth]{figures/NS3dWithDissiS(300).png}
    %     % \caption{Fluid velocity of energy dissipation }
    %     \label{NS3dWithDissiS(300)}
    % \end{minipage}
    % }
    % \vspace{1em}
    \subfigure[Final optimized design]{
    \begin{minipage}[t]{0.45\linewidth}
        \centering
        \includegraphics[width=0.9\textwidth]{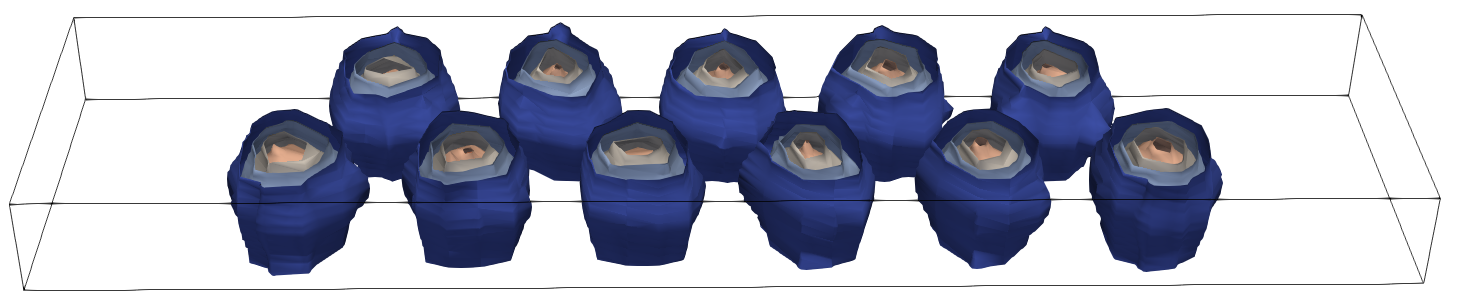}
        \label{NS3dWithDissiS}
    \end{minipage}
    }
    \hfill
    \subfigure[Concentration distribution on the optimized design]{
    \begin{minipage}[t]{0.45\linewidth}
        \centering
        \includegraphics[width=0.9\textwidth]{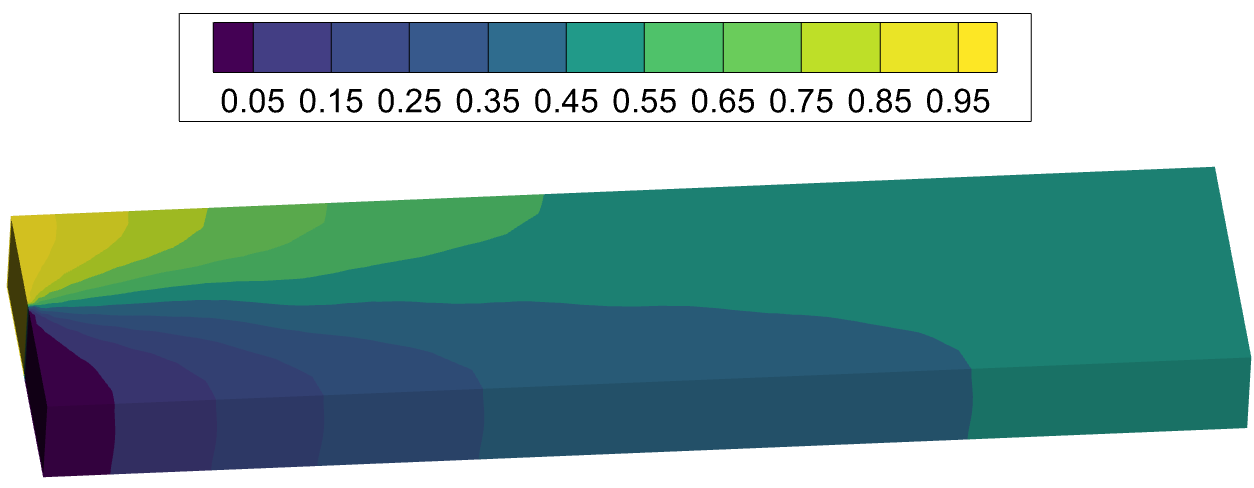}
        % \caption{Optimization shape of energy dissipation}
        \label{NS3dWithDissiC}
    \end{minipage}%
    }
    \hfill
    \subfigure[Velocity field on the optimized design]{
    \begin{minipage}[t]{0.45\linewidth}
        \centering
        \includegraphics[width=0.9\textwidth]{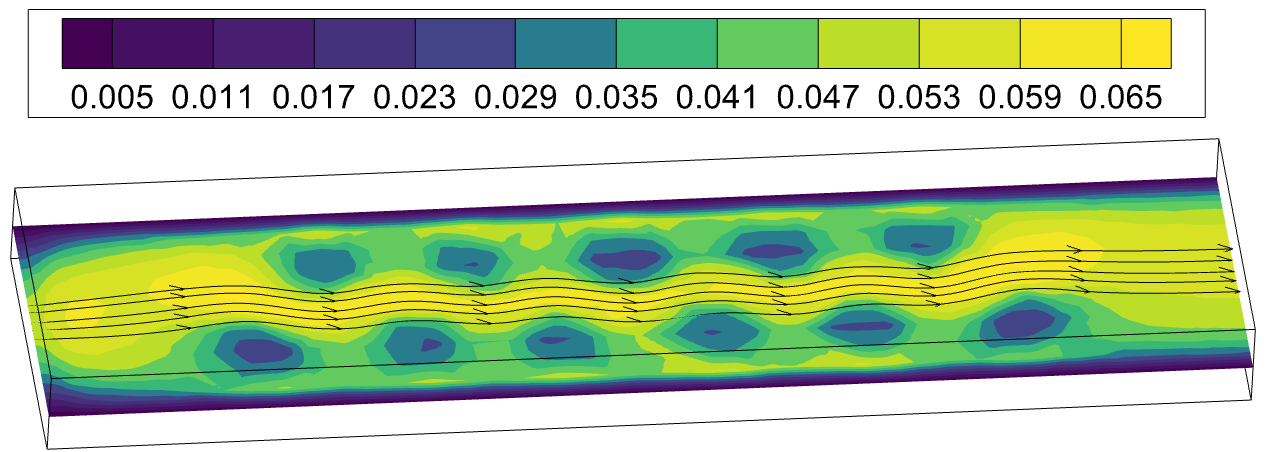}
        % \caption{Vol of 2d energy dissipation}
        \label{NS3dWithDissiV}
    \end{minipage}
    }
    \caption{Results for the 3D mixer problem: Example 9.}
    \label{PhaseFieldWithDissiDuringProcess}
\end{figure}

% \begin{figure}[htbp]
%     \centering
%     \subfigure[Concentration distribution for 3D mixer problem]{
%     \begin{minipage}[t]{0.45\linewidth}
%         \centering
%         \includegraphics[width=0.9\textwidth]{figures/NS3dWithDissiC.png}
%         % \caption{Optimization shape of energy dissipation}
%         \label{NS3dWithDissiC}
%     \end{minipage}%
%     }
%     \hfill
%     \subfigure[Velocity field of optimized shape]{
%     \begin{minipage}[t]{0.45\linewidth}
%         \centering
%         \includegraphics[width=0.9\textwidth]{figures/NS3dWithDissiV.png}
%         % \caption{Vol of 2d energy dissipation}
%         \label{NS3dWithDissiV}
%     \end{minipage}
%     }
% \caption{Results on physical fields of the 3D mixer problem: Example 9.}
% \label{NS3dWithDissi}
% \end{figure}

%Since part of the objective is allocated to the dissipation term, the reduction in $J_2$ is relatively limited. Nevertheless, this result also suggests that the incorporation of a dissipation term into the objective provides a promising alternative approach to mitigating inlet blockage in micromixing problems.

\section{Conclusion} \label{conclusion}
% For incompressible Navier–Stokes-based topology optimization problems, we adopt a gradient flow approach derived from a modified Ginzburg–Landau free energy functional. The stability and effectiveness of the proposed numerical scheme are demonstrated through two- and three-dimensional energy dissipation problems. We further verify the robustness of the method under multiphysics coupling by testing it on two-dimensional benchmark micromixing problems, including both classical and electrokinetically driven cases. To address the frequent inlet blockage observed in three-dimensional configurations, we propose a multi-objective formulation that incorporates a dissipation-based term as a mitigation strategy. Based on these results, we conjecture that the proposed formulation is particularly well-suited for topology optimization problems exhibiting spontaneous symmetry breaking behavior, such as directional deviations in crystal growth during phase-change material design.
In this work, we coupled the fluid system with the convection-diffusion equation and the Poisson–Boltzmann equation to construct a new multiphysics model for shape design. Starting from the physical interpretation of the phase-field equation, we derived the free energy expression based on the Ising model and thereby corrected the original Ginzburg–Landau formulation. The stability and effectiveness of the proposed numerical scheme were demonstrated through energy dissipation problems. We further validated the robustness of the method under multiphysics coupling by applying it to two-dimensional benchmark micromixing problems, including both classical and electro-kinetically driven cases. To address the frequent inlet blockage observed in three-dimensional configurations, we proposed a multi-objective formulation incorporating a dissipation-based term as a mitigation strategy. We conclude that, owing to its preservation of the underlying physical properties, the proposed free energy formulation holds strong potential for broader applications in more complex multiphysics coupling models.

\appendix
\section{Derivation of the Ginzburg–Landau free energy from the Ising Model}
\label{appendix:IsingGL}
In the classical Ising model, each spin interacts only with its nearest neighbors. Under the assumption of coarse-graining, the domain is partitioned into small subregions (or blocks), and the local behavior within each block is characterized by an average spin value. The energy of a given block includes contributions from nearest-neighbor interactions as well as coupling with an external field. To provide a clearer understanding of the microscopic interaction mechanism, a schematic diagram of the Ising model is presented in Fig. \ref{Ising}. 
\begin{figure}[htbp]
    \centering
    \includegraphics[width=0.6\linewidth]{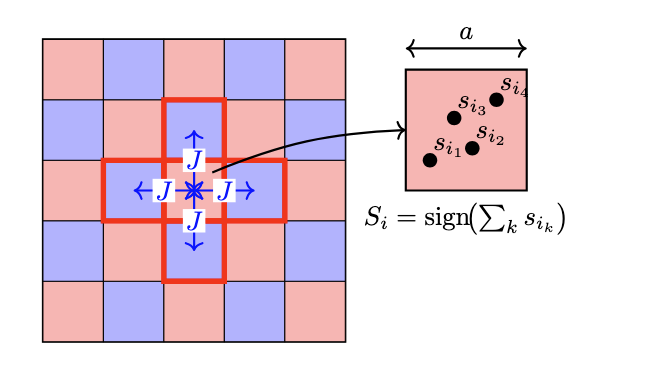}
    \caption{Ising model}
    \label{Ising}
\end{figure}

The domain is discretized into a regular lattice, where each small block (or coarse-grained cell) with length $a\times a$ is indexed by~$i$. Within the $i$-th block, individual particles are denoted by $s_{i_k}$, taking values in $\{+1, -1\}$. The coarse-grained spin variable $S_i$ associated with the $i$-th block is defined by the majority rule
\begin{equation*}
S_i = \mathrm{sign}\ ( \sum_k s_{i_k} ).
\end{equation*}

As illustrated in Fig. \ref{Ising}, each block interacts with its nearest neighbors. Under an external field $h$, the free energy of the system can be expressed as in \cite[Eq. (2.95)]{Goldenfeld1992}:
\begin{equation*}
F = -\tilde{J} \sum_{\langle i, j \rangle} S_i S_j - \sum_i h S_i,
\end{equation*}
where $\tilde{J} > 0$ represents the interaction strength promoting alignment between neighboring spins, $\langle i, j \rangle$ denotes unordered nearest-neighbor pairs with each pair operated on only once. The corresponding partition function reads:
\begin{equation*}
Z = \sum_{\{S_i\}} \exp( -\frac{F}{k_B T} ),
\end{equation*}
where $k_B$ is the Boltzmann constant, $T$ is the absolute temperature, and the summation is taken over all possible spin configurations $\{S_i\}$. By applying the Gaussian integral identity
\begin{equation*}
e^{\frac{a^2}{2}} = \frac{1}{\sqrt{2\pi}} \int_{-\infty}^{\infty} \exp( -\frac{x^2}{2} + a x ) \dx,
\end{equation*}
and exchanging the order of summation and integration, the partition function can be reformulated as
\begin{equation*}
\begin{aligned}
Z &= \sum_{\{S_i\}} \exp[\frac{1}{k_B T}(\tilde{J}\sum_{\langle i,j\rangle}S_i S_j+\sum_{i}hS_i)]\\
&=\sum_{\{S_i\}} \exp[\frac{1}{2k_B T}(\tilde{J}(\sum_{\langle i,j\rangle}((S_i+S_j)^2-2)+\sum_{i}hS_i)]\\
&=\frac{1}{\sqrt{2\pi}}\sum_{\{S_i\}}\int\prod_{\langle i,j\rangle} \exp[{-\frac{\phi_{ij}^2}{2}-\frac{\tilde{J}}{k_BT}+\sqrt{\frac{\tilde{J}}{k_BT}}\phi_{ij}(S_i+S_j)]}\prod_i\exp(\frac{hS_i}{k_BT})  \mathcal{D}\phi\\
&=\frac{1}{\sqrt{2\pi}}\sum_{\{S_i\}}\int\exp\bigg[\sum_{{\langle i,j\rangle}}(-\frac{\phi_{ij}^2}{2}-\frac{\tilde{J}}{k_BT})+\sum_i\sqrt{\frac{\tilde{J}}{k_BT}}S_i\sum_{j\in\partial i}\phi_{ij}+\frac{hS_i}{k_BT}\bigg]\mathcal{D}\phi\\
&=\frac{1}{\sqrt{2\pi}}\int\exp[\sum_{{\langle i,j\rangle}}(-\frac{\phi_{ij}^2}{2}-\frac{\tilde{J}}{k_BT})+\sum_i\ln[2\cosh(\sqrt{\frac{\tilde{J}}{k_BT}}\sum_{j\in\partial i}\phi_{ij}+\frac{h}{k_BT})]]\mathcal{D}\phi
\end{aligned}
\end{equation*}
where $\int (\cdot)\mathcal{D}\phi$ denotes the functional integration over the auxiliary bond field $\phi_{ij}$ introduced to decouple the quadratic spin–spin interaction, and $\partial i$ denotes the neighbors of particle $i$ (excluding $i$ itself).
It should be emphasized that $\phi_{ij}$ encodes the interaction between two neighboring particles, and a proper description of its contribution at each site is indispensable. Hence, we introduce a continuous order parameter field by averaging over adjacent bonds, $\phi_i = \frac{1}{z} \sum_{j\in\partial i} \phi_{ij}$, where $z$ is the coordination number. In the continuum limit, the bond field can equivalently be expressed as the difference of site fields: $\phi_{ij}=\phi_j-\phi_i$. For a lattice constant $a$, a particle located at $i+a\bm e$ in the direction $\bm e$ relative to site $i$ satisfies
\begin{equation*}
    \phi_j=\phi_i+a \bm{e}\cdot\nabla\phi_i+\frac{a^2}{2}(\bm{e}\cdot\nabla)^2\phi_i+o(a^2).
\end{equation*}
Multiplying both sides by $\phi_i$ and summing over $i$, then performing a long-wavelength expansion, 
\begin{equation*}
    \sum_{\langle i,j\rangle}\phi_i\phi_j=\sum_i(z\phi_i^2-\frac{a^2}{2}|\nabla\phi_i|^2)+o(a^2),
\end{equation*}
so that 
\begin{equation*}
    \sum_{\langle i,j\rangle}(-\frac{\phi_{ij}^2}{2})=-\frac{1}{2}\sum_{\langle i,j\rangle}(\phi_j-\phi_i)^2=\sum_i(-\frac{z}{2}\phi_i^2-\frac{a^2}{4}|\nabla\phi_i|^2).
\end{equation*}
Set $C_0(T):=\frac{1}{\sqrt{2\pi}}\int\exp[\sum_{{\langle i,j\rangle}}(-\frac{\tilde{J}}{k_BT})]\mathcal{D}\phi$. An approximate expression for the partition function can be expressed as:
\begin{equation*}
Z = C_0(T)\int\exp[\sum_i(-\frac{z}{2}\phi_i^2-\frac{a^2}{4}|\nabla\phi_i|^2+\ln[2\cosh(z\sqrt{\frac{\tilde{J}}{k_BT}}\phi_i+\frac{h}{k_BT})])]\mathcal{D}\phi.
\end{equation*}

Consider the case without the external field $h$, using the thermodynamic relation for the Helmholtz free energy 
$F = -k_B T \ln Z$, and apply the Taylor expansion at $\phi_i=0$, we obtain the Ginzburg--Landau free energy density functional $\mathcal{F}(\phi)$ in the form:
\begin{equation*}
\mathcal{F}(\phi) =\frac{\kappa}{2}|\nabla\phi|^2+\hat{w}(\phi)+o(\phi^4),%A(T)\phi^2+B(T)\phi^4+o(\phi^4),
\end{equation*}
% \begin{equation*}
% F(\phi) =\frac{a^2k_BT}{4}|\nabla\phi|^2+(\frac{zk_BT}{2}-\frac{z^2J}{2})\phi^2+\frac{z^4J^2}{12k_BT}\phi^4.
% \end{equation*}
where $\kappa:=\frac{a^2 k_B T}{2}$ and the local free energy $\hat{w}(\phi):=A(T)\phi^2+B(T)\phi^4$. The standard Ginzburg-Landau free energy can be derived as
\begin{equation*}
    F(\phi) =\int_\Omega \mathcal{F}(\phi)\dx=\int_{\Omega}\frac{\kappa}{2}|\nabla\phi|^2+\hat{w}(\phi)\dx,
\end{equation*}
which takes the same form as in \cite[Eq. (5.21)]{Goldenfeld1992}.
However, as seen from the expression above, when the temperature drops below the critical value
$
T_c := \frac{z\tilde{J}}{k_B}$,
the quadratic coefficient $A(T)$ in  $w(\phi)$ becomes negative, and the local free energy density $\hat{w}(\phi)$ undergoes a transition from a single-well to a double-well potential. 
% This change in the energy landscape gives rise to a spontaneous symmetry breaking phenomenon, where the system favors one of the asymmetric states despite the underlying symmetry of the setup. The two minima of the double-well potential correspond to two distinct phases of the material. Under the driving mechanism of the Ginzburg–Landau free energy, each point in the domain tends to evolve toward one of these two stable states, thereby representing the local phase selection during the transition process.

\bibliographystyle{plain} 
\bibliography{references}

\end{document}